\newcommand{\tabitem}{~~\llap{\textbullet}~~}
\theoremstyle{definition}
\newtheorem{definition}{Definition}[section]
\newtheorem{proposition}{Proposition}[section]
\newtheorem{lemma}{Lemma}[section]
\newtheorem{theorem}{Theorem}[section]
\newtheorem{remark}{Remark}[section]
\newcommand{\one}{\mathbf{1}}
\newcommand{\h}{\mathbf{h}}
\newcommand{\hP}{\hat{P}}
\newcommand{\tT}{\tilde{T}}
\newcommand{\obs}{\text{obs}}
\newcommand{\mle}{\text{mle}}
\newcommand{\KL}{\text{KL}}
\newcommand{\mE}{\mathbb{E}}
\def\tE{{\text{E}}}
\title{
\textbf{Maximum Likelihood Estimation for\\ Learning Populations of Parameters}}
\author{Ramya Korlakai Vinayak$^\dagger$, Weihao Kong$^\ddagger$, Gregory Valiant$^\ddagger$, Sham M. Kakade$^{\dagger}$\\ \\
$^\dagger$Allen School of Computer Science \& Engineering, University of Washington\\
$^\ddagger$Department of Computer Science, Stanford University\\
\tt{\{ramya, sham\}@cs.washington.edu, \{whkong, valiant\}@stanford.edu}}
\begin{document}
\date{}
\maketitle

\begin{abstract}
Consider a setting with $N$ independent individuals, each with an unknown parameter, $p_i \in [0, 1]$ drawn from some unknown distribution $P^\star$. After observing the outcomes of $t$ independent Bernoulli trials, i.e., $X_i \sim \text{Binomial}(t, p_i)$ per individual, our objective is to accurately estimate $P^\star$. This problem arises in numerous domains, including the social sciences, psychology, health-care, and biology, where the size of the population under study is usually large while the number of observations per individual is often limited. 

Our main result shows that, in the regime where $t \ll N$, the maximum likelihood estimator (MLE) is both statistically minimax optimal and efficiently computable. Precisely, for sufficiently large $N$, the MLE achieves the information theoretic optimal error bound of $\mathcal{O}(\frac{1}{t})$ for $t < c\log{N}$, with regards to
the earth mover's distance (between the estimated and true distributions). More generally, in an exponentially large interval of $t$ beyond $c \log{N}$, the MLE achieves the minimax error bound of $\mathcal{O}(\frac{1}{\sqrt{t\log N}})$. In contrast, regardless of how large $N$ is, the naive "plug-in" estimator for this problem only achieves the sub-optimal error of $\Theta(\frac{1}{\sqrt{t}})$.
\end{abstract}

\section{Introduction}
\label{sec:intro}
The problem of learning a distribution of parameters over a population arises in several domains such as social sciences, psychology, medicine, and biology~\cite{lord1965strong, lord1975empirical, millar1986distribution, palmer1990small, colwell1994estimating, bell2000environmental}. While the number of individuals in the population can be very large, the number of observations available per individual is often very limited, which prohibits accurate estimation of the parameter of interest per individual. In such sparse observation scenarios, \emph{how accurately can we estimate the distribution of parameters over the population?} 

In the 1960's F. M. Lord studied the problem of estimating the distribution of parameters over a population in the context of psychological testing~\cite{lord1965strong, lord1969estimating}. Consider a study involving a large number of independent individuals. Each individual has an unknown probability $p_i$ of answering a question correctly. Given the scores of these individuals on a test with a small set of questions, the goal is to estimate the underlying distribution of the $p_i$'s. Such an estimated distribution can be used in downstream tasks, like testing if the distribution of scores is uniform or multimodal, or comparing two tests of the same psychological trait.

We use the lens of sparse regime analysis for this problem of learning a population of parameters. Our analysis is inspired by the recent advances in a related problem of estimating discrete distributions and their properties such as, entropy and support size, when the number of observations is much smaller than the support size of the distribution~\cite{valiant2011estimating, valiant2011power, jiao2015minimax, wu2015chebyshev, valiant2016instance, wu2016minimax, orlitsky2016optimal, acharya2017unified, jiao2018minimax, Han2018}. However, we note that our setting is not the same as estimating a discrete distribution. For instance, the probabilities sum to 1 for a discrete distribution, where as, the true parameters in our setting need not sum to 1.

There have been several classical works on non-parametric mixture models in general~\cite{turnbull1976empirical, simar1976maximum, laird1978nonparametric, lindsay1983geometry, lindsay1983geometry2, bohning1989likelihood, lesperance1992algorithm} and binomial mixture models in particular~\cite{cressie1979quick, wood1999binomial} which have studied the geometry of the maximum likelihood estimator (MLE), the optimality conditions, identifiability, and uniqueness of the MLE solution, and algorithms for computing the optimal solution to the MLE. However, the statistical analysis of how accurately the MLE recovers the underlying distribution has not been addressed. In this paper, we fill this gap, and show that the MLE achieves the optimal error bound with regards to the earth mover's distance (or Wasserstein-1 distance, Definition~\ref{def:W1}) between the estimated and true distributions (equivalently, the $l_1$-distance between the CDF's).

\subsection{Problem set-up and summary of results}
\label{sec:problemsetup}
The setting considered in~\cite{lord1969estimating} can be modeled as follows. Consider a set of $N$ independent coins, each with its own unknown bias $p_i\in [0, 1]$ drawn independently from some unknown distribution $P^\star$ over $[0, 1]$. That is, the probability of seeing a head when coin $i$ is tossed is $p_i$. For each coin $i$, we get to observe the outcome of $t$ independent tosses, denoted by, $X_i \sim \text{Binomial}(t, p_i)$. Our goal is to estimate the unknown distribution $P^\star$ from $\{X_i\}_{i=1}^N$.

The MLE for this problem is formulated as follows:
\begin{equation*}
\hP_{\mle} \in \underset{Q \in \mathcal{D}}{\text{arg max}}\ \sum_{i=1}^N \log{ \int_{0}^1 \binom{t}{X_i} y^{X_i} (1 - y)^{t-X_i} dQ(y)},
\end{equation*}
where $\mathcal{D}$ is the set of all distributions on $[0, 1]$. 

\textbf{Our Contribution:} We bound the earth mover's distance (or the Wasserstein-1 distance) between the true distribution $P^\star$ and the MLE solution $\hP_{\mle}$, and show that:
\begin{theorem} (Informal statement)

\begin{itemize}[leftmargin=*, topsep=0pt]
    \item The MLE achieves an error bound of 
    \begin{equation*}
        W_1(P^\star, \hP_{\mle}) = \mathcal{O}_{\delta}\left( \frac{1}{t} \right),\footnote{$\mathcal{O}_\delta(.)$ hides $\log{(1/\delta)}$ in the bound for it to hold with probability at least $1-2\delta$.}
    \end{equation*} when $t = \mathcal{O}(\log{N})$. The bound of $\Theta\left( \frac{1}{t} \right)$ is information theoretically optimal up to a constant factor.
    \item  The MLE achieves an error bound of 
     \begin{equation*}W_1(P^\star, \hP_{\mle}) = \mathcal{O}_\delta\left(\frac{1}{\sqrt{t \log{N}}} \right),
     \end{equation*}
     when $ t \in \left[ \Omega(\log{N}), \mathcal{O}\left({N^{2/9 - \epsilon}} \right) \right]$, and this bound is information theoretically optimal in this regime. 
   \end{itemize}
   \end{theorem}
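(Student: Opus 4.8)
The plan is to reduce the problem to the $(t+1)$-dimensional ``compressed'' statistic and then translate back via polynomial approximation. For a distribution $Q$ on $[0,1]$, let $\mathbf r_Q$ denote the induced law of $X\sim\mathrm{Binomial}(t,p)$ with $p\sim Q$, so that $\mathbf r_Q(k)=\binom tk\int_0^1 y^k(1-y)^{t-k}\,dQ(y)$; this is the image of the first $t$ moments of $Q$ under the triangular (hence invertible) Bernstein map, with inverse $m_\ell(Q)=\sum_{k\ge\ell}\frac{\binom k\ell}{\binom t\ell}\mathbf r_Q(k)$. Write $\mathbf r^\star=\mathbf r_{P^\star}$ and let $\hat{\mathbf r}$ be the empirical law of $X_1,\dots,X_N$. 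The skeleton is: (i) because $\hP_{\mle}$ maximizes the likelihood, $\mathbf r_{\hP_{\mle}}$ is close to $\hat{\mathbf r}$ and hence to $\mathbf r^\star$; (ii) using $W_1(P^\star,\hP_{\mle})=\sup_{f\ 1\text{-Lip}}\big(\int f\,dP^\star-\int f\,d\hP_{\mle}\big)$ together with a Jackson-type approximation of $f$ by a low-degree polynomial $\tilde f$, reduce the error to a weighted combination of low-degree moment errors; (iii) balance the polynomial degree against the sampling noise to obtain the two-regime bound.

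For step (i) --- the step I expect to be the main obstacle --- the only handle on $\hP_{\mle}$ is the single scalar inequality $\sum_i\log\mathbf r_{\hP_{\mle}}(X_i)\ge\sum_i\log\mathbf r_Q(X_i)$ for all feasible $Q$. Choosing $Q=\tilde P$, a lightly smoothed version of $P^\star$ (say $P^\star$ convolved with uniform noise of width $\Theta(1/t)$, which perturbs $W_1$ by only $O(1/t)$ and ensures $\mathbf r_{\tilde P}(k)$ is bounded below by an inverse polynomial in $t$), gives $\KL(\hat{\mathbf r}\,\|\,\mathbf r_{\hP_{\mle}})\le\KL(\hat{\mathbf r}\,\|\,\mathbf r_{\tilde P})$. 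Since $\mathbf r_{\tilde P}$ is bounded below, $\KL(\hat{\mathbf r}\,\|\,\mathbf r_{\tilde P})\le\chi^2(\hat{\mathbf r}\,\|\,\mathbf r_{\tilde P})$ concentrates around its mean, which is of order $t/N$ up to logarithmic and smoothing factors (the class is genuinely $(t+1)$-dimensional, so standard multinomial concentration applies); a high-probability version costs an extra $\log(1/\delta)$. By Pinsker this yields $\|\hat{\mathbf r}-\mathbf r_{\hP_{\mle}}\|_1\le\widetilde O_\delta(\sqrt{t/N})$.

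For steps (ii)--(iii), fix a $1$-Lipschitz $f$ and a degree-$s$ polynomial $\tilde f$ with $\|f-\tilde f\|_\infty=O(1/s)$, chosen --- this is the second technical ingredient --- so that its representation $\tilde f=\sum_{j=0}^t g_j\,\binom tj y^j(1-y)^{t-j}$ in the degree-$t$ Bernstein basis has small coefficients; the right scaling is $\|g\|_\infty\le e^{O(s^2/t)}$, reflecting that the binomial-$t$ smoothing attenuates degree-$s$ content by roughly $e^{-\Theta(s^2/t)}$ and must be ``inverted'' to read off moments of degree $s>\sqrt t$. Then
$$\Big|\!\int f\,dP^\star-\!\int f\,d\hP_{\mle}\Big|\le \frac{O(1)}{s}+\Big|\sum_j g_j\big(\mathbf r^\star(j)-\hat{\mathbf r}(j)\big)\Big|+\|g\|_\infty\,\|\hat{\mathbf r}-\mathbf r_{\hP_{\mle}}\|_1 .$$
The middle term is $\tfrac1N|\sum_i(g_{X_i}-\mE g)|$, controlled by Bernstein's inequality with variance $\sum_j g_j^2\mathbf r^\star(j)\le e^{O(s^2/t)}$, uniformly over $f$ at the cost of a union bound over the low-complexity class of admissible $\tilde f$; the last term is $e^{O(s^2/t)}\,\widetilde O_\delta(\sqrt{t/N})$ by step (i). Hence $W_1(P^\star,\hP_{\mle})\lesssim \frac1s+e^{O(s^2/t)}\,\widetilde O_\delta\!\big(\sqrt{t/N}\big)$, subject to $s\le t$ (only $t$ moments are available). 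Optimizing: when $t=O(\log N)$ take $s\asymp t$, the second term is negligible for $N$ large, and $W_1=O_\delta(1/t)$; when $\log N\lesssim t$, take $s\asymp\sqrt{t\log N}$, for which $e^{O(s^2/t)}=N^{O(1)}$ still beats $\sqrt N/\mathrm{poly}(t)$ precisely when $t$ is below a fixed polynomial in $N$ (this is where the exponent $2/9-\epsilon$ gets pinned down by tracking the constants), giving $W_1=O_\delta(1/\sqrt{t\log N})$. The matching lower bounds follow the standard recipe: construct pairs (resp.\ families) of distributions on $[0,1]$ that agree on their first $\asymp s$ moments yet are $\Omega(1/s)$ apart in $W_1$ --- so their induced laws on $X$ are statistically close --- and invoke Le Cam's (resp.\ Fano's) method with $s$ at the same threshold $\min\{t,\sqrt{t\log N}\}$.

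In summary, steps (ii)--(iii) are deterministic approximation theory plus textbook concentration; the crux is step (i): converting the one global optimality inequality into a coordinatewise $\ell_1$/$\chi^2$ control of $\mathbf r_{\hP_{\mle}}$ against $\mathbf r^\star$ --- uniformly over the infinite-dimensional feasible set and robustly to the exponentially small coordinates of $\mathbf r^\star$ --- which is exactly why the smoothed competitor $\tilde P$ and the finite-dimensionality of the compressed statistic are the load-bearing ideas.
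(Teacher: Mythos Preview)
Your high-level architecture matches the paper's exactly: decompose $W_1$ via Lipschitz duality, approximate each $f$ by a degree-$s$ polynomial written in the degree-$t$ Bernstein basis, and split the error into (a) approximation, (b) empirical fluctuation of the fingerprint, and (c) MLE-versus-empirical fingerprint. The balancing of $s$ against $t$ and $N$ that produces the two regimes is also the same.

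Where you diverge from the paper is in the allocation of difficulty. You flag step (i) as ``the main obstacle'' and introduce a smoothed competitor $\tilde P$ to force $\mathbf r_{\tilde P}(k)$ to be bounded below so that a $\chi^2$ argument goes through. In the paper this step is short and requires no smoothing: one takes $Q=P^\star$ directly in the optimality inequality to get $\KL(\hat{\mathbf r}\,\|\,\mathbf r_{\hP_{\mle}})\le\KL(\hat{\mathbf r}\,\|\,\mathbf r^\star)$, applies Pinsker, and invokes a known concentration bound for the KL divergence of an empirical multinomial from its mean (the paper cites Mardia et al.), yielding $\|\hat{\mathbf r}-\mathbf r_{\hP_{\mle}}\|_1=O\big(\sqrt{(t/N)\log(N/t)+\log(1/\delta)/N}\big)$. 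No lower bound on $\mathbf r^\star(k)$ is needed, because the multinomial-KL concentration result already handles zero or tiny cells. Your smoothing route is not wrong, but it is a detour, and the claims you make about it (that convolution with width-$1/t$ noise yields inverse-polynomial lower bounds on all $\mathbf r_{\tilde P}(k)$, and that the resulting bias in the $\chi^2$ is harmless) would themselves need justification.

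The genuine technical core --- which you state as a fact (``the right scaling is $\|g\|_\infty\le e^{O(s^2/t)}$'') but do not prove --- is exactly what the paper isolates as its main proposition. Your heuristic that binomial-$t$ smoothing attenuates degree-$s$ content by $e^{-\Theta(s^2/t)}$ is suggestive but not a proof; naively inverting the Bernstein operator gives exponential-in-$t$ coefficients. The paper's argument is: take the Jackson/Chebyshev approximant $\tilde f=\sum_{m\le s}a_m\tilde T_m$ with $\|a\|_2\le 1$; express each shifted Chebyshev polynomial $\tilde T_m$ in degree-$m$ Bernstein form and then \emph{degree-raise} to degree $t$, obtaining explicit coefficients $C(t,m,j)$; and bound $\sum_j C(t,m,j)^2$ by writing down a generating function for the sequence $\{C(t,m,j)\binom{t}{j}\}_j$, integrating against a Beta kernel to strip the binomials, bounding the resulting function on the unit circle, and applying Parseval. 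This yields $|C(t,m,j)|\le(t+1)e^{m^2/t}$, hence $\max_j|b_j|\le\sqrt{s}(t+1)e^{s^2/t}$, and it is the extra polynomial-in-$t$ factor here (combined with the $\sqrt{t/N}$ from step (i)) that pins the upper limit on $t$ at $N^{2/9-\epsilon}$. Your proposal becomes complete once you supply this argument or an alternative proof of the same coefficient bound; the rest of what you wrote is either correct or mildly over-engineered relative to the paper.
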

Table~\ref{table:compareresults} summarizes our results in comparison to other estimators. While the moment matching estimator~\cite{Tian2017} achieves the same minimax optimal error bound as the MLE when $t = \mathcal{O}(\log{N})$, it fails when $t = \Omega(\log{N})$ due to high variance in the larger moments. While the local moment matching approach~\cite{Han2018} could theoretically avoid this weakness, it involves hyperparameter tuning which makes it difficult to work with in practice (Remark~\ref{remark:localmoments} in Section~\ref{sec:mainresults}). In contrast, the MLE naturally adapts itself and achieves the optimal rates in different regimes without the need for any parameter tuning. 
Furthermore, our analysis involves bounding the coefficients of Bernstein polynomials approximating Lipschitz-1 functions (Proposition~\ref{prop:coeffbound}). This question is of independent interest with implications to general polynomial approximation theory as well as applications in computer graphics. 

\subsection{Outline}
The rest of the paper is organized as follows. In Section~\ref{sec:relatedworks}, we discuss the related works. In Section~\ref{sec:mainresults}, we describe the maximum likelihood estimator for the problem and formally state our main results.  We provide the outline of the proofs of our main results in Section~\ref{sec:proofsketches}. The details of the proofs are available in the appendix. 
Finally, we conclude in Section~\ref{sec:conclusion} by discussing some open questions for future research directions.

\begin{table}[]
\caption{Comparison of results}\label{table:compareresults}
\begin{center}
\begin{tabular}{|c|c|}
\hline 
&  \\
\textbf{Estimators} & \textbf{Bound on EMD}\\[5pt]
\hline
&  \\
 \shortstack{Empirical\\ Estimator} & \shortstack{$\Theta\left(\frac{1}{\sqrt{t}}\right) + \Theta\left(\frac{1}{\sqrt{N}}\right)$\\ in all regimes} \\[10pt]
\hline
&  \\
\shortstack{Moment\\ Matching\\ \cite{Tian2017}} &  \shortstack{\tabitem$\Theta\left( \frac{1}{t}\right) $,  when $t = \mathcal{O}(\log{N})$\\ \tabitem Fails when $t = \Omega(\log{N})$ }   \\ [10pt]
\hline
&  \\
\shortstack{MLE\\ (this paper)} & \shortstack{\tabitem $\Theta\left( \frac{1}{t}\right)$, when $t = \mathcal{O}(\log{N})$\\ \tabitem $\Theta\left( \frac{1}{\sqrt{t\ \log{N}}}\right), $ when $t \in \left[\Omega(\log{N}),\ \mathcal{O}\left({N^{2/9 - \epsilon}} \right)\right]$ }   \\[10pt]
\hline 
\end{tabular}
\end{center}
\end{table}
\section{Related Works}
\label{sec:relatedworks}
Starting from~\cite{lord1969estimating}, there has been a great deal of interest in the problem of estimating the distribution of true scores of a population of independent entities. Maximum likelihood estimation for non-parametric mixture models has been studied extensively~\cite{lord1975empirical, cressie1979quick, laird1978nonparametric, turnbull1976empirical, lesperance1992algorithm}. \cite{lindsay1983geometry} and \cite{lindsay1983geometry2} delineate the geometry of the MLE landscape for non-parametric mixture models in general, and specifically for exponential family respectively. \cite{wood1999binomial} further discusses the issue of uniqueness of the solution for mixture of binomials and the relationship with the moment space. As mentioned in the introduction, the accuracy of the MLE solution for this formulation has not been studied in the literature. Our work fills in this gap by showing that the MLE solution is minimax optimal when $t \ll N$.

In a recent work~\cite{Tian2017}, the authors proposed a \emph{moment matching estimator} to estimate the unknown distribution of the biases in the regime where the number of tosses per coin $t = \mathcal{O}(\log{N})$. This estimator finds a distribution on $[0, 1]$ that closely matches the first $t$ empirical moments of the unknown distribution that can be estimated using the observations. This moment matching estimator has an error bound of $\mathcal{O}\left( \frac{1}{t} \right) + \mathcal{O}_\delta\left( 2^t t \sqrt{\frac{\log{t}}{N}}  \right)$ in Wasserstein-1 distance. Furthermore, ~\cite{Tian2017} also showed that $\Omega\left(\frac{1}{t}\right)$ is a lower bound in this setting.  The main weakness of this method of moments approach is that it fails to obtain the optimal rate when $t > c \log{N}$. 

A tangentially related problem is that of estimating a discrete distribution and its symmetric properties\footnote{A function over a discrete distribution is said to be a \emph{symmetric function} if it remains invariant to the relabeling of the domain symbols.} such as, entropy and support size, when the number of observations is much smaller than the support size of the distribution. This is a well-studied classical problem in statistics~\cite{fisher1943relation, good1956number, efron1976estimating}. It has received a lot of interest in the past decade and continues to be a very active area of research~\cite{paninski2003estimation, orlitsky2004modeling, acharya2009recent, acharya2010exact, valiant2011estimating, valiant2013estimating,jiao2015minimax, wu2015chebyshev, valiant2016instance, wu2016minimax, orlitsky2016optimal, acharya2017unified, jiao2018minimax, Han2018}. Recent work~\cite{Han2018} used local moment matching to provide bounds on estimating symmetric properties of discrete distributions under the Wasserstein-1 distance. This technique of local moment matching can be used in our setting to improve the bounds obtained in~\cite{Tian2017} in the regime where $t > c \log N$. We discuss this more in Section~\ref{sec:mediumsample}.

In a similar spirit to our work, a series of works~\cite{acharya2009recent,acharya2010exact,acharya2017unified} examined the \textit{profile} or \textit{pattern} maximum likelihood as a unifying framework for estimating symmetric properties of a discrete distribution. Unlike in our setting, it is computationally challenging to compute the exact maximum likelihood estimator, and the question becomes how to efficiently approximate it (see e.g.~\cite{vontobel2012bethe}).
\section{Main Results}
\label{sec:mainresults}
Before formally stating our results, we introduce some notation, discuss the MLE objective and define the Wasserstein-1 metric used to measure the accuracy of estimation.\\

\noindent \textbf{Notation:} Recall that $N$ is the number of independent coins and $t$ is the number of tosses per coin. The biases of the coins are denoted by $\{p_i\}_{i=1}^N$, where each $p_i\in [0, 1]$ is drawn from some unknown distribution $P^\star$ on $[0, 1]$. The set of observations is $\{X_i\}_{i=1}^N$, where $X_i \sim \text{Binomial}(t, p_i)$.
For $s \in \{0, 1, ..., t\}$, let $n_s$ denote the number of coins that show $s$ heads out of $t$ tosses. Let $h_s^{\obs}$ denote the fraction of coins that show $s$ heads. 
\begin{equation}
     n_s := \sum_{i=1}^N \one_{\{X_i = s\}},\ h_s^{\obs} := \frac{n_s}{N},
\end{equation}
where $\one_{\mathcal{A}}$ is indicator function for set $\mathcal{A}$. $\h^{\obs} := \{h_0^{\obs}, h_1^{\obs}, ..., h_t^{\obs} \}$ is the observed \emph{fingerprint}. Since the identity of the coins is not important to estimate the distribution of the biases, the observed fingerprint is a sufficient statistics for the estimation problem.\\

\noindent \textbf{MLE Objective:} The MLE estimate of the distribution of biases given the observations $\{X_i\}_{i=1}^N$ is,
\begin{IEEEeqnarray*}{rCl}
\hP_{\mle} &\in& \underset{Q \in \mathcal{D}}{\text{arg max}}\ \sum_{i=1}^N \log{ \int_{0}^1 \binom{t}{X_i} y^{X_i} (1 - y)^{t-X_i} dQ(y)},\\
&=& \underset{Q \in \mathcal{D}}{\text{arg max}} \sum_{s=0}^t n_s \log{ \underset{=:E_Q[h_s]}{\underbrace{\int_{0}^1 \binom{t}{s} y^{s} (1 - y)^{t -s} q(y) dy } }},
\end{IEEEeqnarray*}
where $\mathcal{D}$ is the set of all distributions on $[0, 1]$, $n_s$ is the number of coins that that see $s$ heads out of $t$ tosses, and $E_Q[h_s]$ is the expected fraction of the population that sees $s$ heads out of $t$ tosses under the distribution $Q$. Equivalently, the MLE can be written in terms of the fingerprint as follows,
\begin{IEEEeqnarray}{rCl}
\hP_{\mle} &\in& \underset{Q \in \mathcal{D}}{\text{arg max}}\ \sum_{s=0}^t h_s^{\text{obs}} \log{E_Q[h_s]},\label{eqn:MLEobj} \\
&=& \underset{Q \in \mathcal{D}}{\text{arg min}}\ \text{KL}\left( \mathbf{h}^{\obs},E_{Q}[\h] \right), \label{eqn:MLEobjKL}
\end{IEEEeqnarray}
where $\text{KL}(A, B)$ is the Kullback-Leibler divergence\footnote{KL divergence between two discrete distributions A and B supported on $\mathcal{X}$ is defined as $\text{KL}(A, B) = \sum_{x \in \mathcal{X}} A(x) \log{\frac{A(x)}{B(x)}}$.} between distributions $A$ and $B$, $\h^{\obs}$ is the observed fingerprint vector and $E_{Q}[\h]$ denotes the expected fingerprint vector when the biases are drawn from distribution $Q$.
\begin{remark}\label{remark:MLEGeometry}
The set $\mathcal{D}$ of all distributions over $[0, 1]$ is convex. Furthermore, the objective function of the MLE (Equation~\ref{eqn:MLEobjKL}) is convex in $Q$ and strictly convex in the valid fingerprints, $\{E_Q[h_s]\}_{s=0}^t$. While there is a unique $E_{\hP_{\mle}}[\h]$ that minimizes the objective~\eqref{eqn:MLEobjKL}, there can be many distributions $Q^\star \in \mathcal{D}$ that can give rise to the optimal expected fingerprint. Moreover, while the fingerprint vector $\h$ lives in $\Delta^t$, the $t$-dimensional simplex in $\mathbb{R}^{t+1}$, not all vectors in $\Delta^t$ can be valid fingerprints. The set of all valid fingerprints is a small convex subset of $\Delta^t$. Very often $\h^{\obs}$ falls outside the set of valid fingerprints and the solution to the MLE is the closest projection under the KL divergence onto the valid fingerprint set. Furthermore, the fingerprints are related to moments via a linear transform. The geometry of the set of valid fingerprints therefore can also be described using moments. For more details on this geometric description we refer the reader to~\cite{wood1999binomial}.
\end{remark}

\noindent \textbf{Wasserstein-1 Distance:} We measure the accuracy of our estimator using the Wasserstein-1 distance or the earth mover's distance (EMD) between two probability distributions over the interval $[0, 1]$ which is defined as:
\begin{definition}[Wasserstein-1 or earth mover's distance]\label{def:W1}
\begin{equation}
W_1(P, Q) := \underset{\gamma \in \Gamma(P, Q)}{\text{inf}} \int_{x=0}^1 \int_{y=0}^1 |x-y|\ d\gamma(x, y),
\end{equation}
where $\Gamma(P, Q)$ is a collection of all the joint distributions on $[0, 1]^2$ with marginals P and Q. A dual definition due to Kantarovich and Rubinstein~\cite{Kantorovich1958} of this metric is as follows:
\begin{IEEEeqnarray}{rCl}
\label{eqn:EMD}
W_1(P, Q) &:=& \underset{f \in \text{Lip}(1)}{\text{sup}} \int_0^1 f(x) (p(x) - q(x)) dx, \\
&=& \underset{f \in \text{Lip}(1)}{\text{sup}} \left( \text{E}_P[f] - \text{E}_Q[f] \right),
\end{IEEEeqnarray}
where $p$ and $q$ are the probability density functions of the distributions P and Q respectively, and $\text{Lip}(1)$ denotes the set of Lipschitz-1 functions. 
\end{definition}
Wassertein-1 distance is a natural choice to measure the accuracy of an estimator in our setting. E.g., suppose the true distribution $P^\star$ is $\delta(0.5) = 1$. Let $P_1$ with $\delta(0.45) = 1$ and $P_2 $ with $\delta(0) = \delta(1) = \frac{1}{2}$ be the output of two estimators. The Wassertein-1 distance, $W_1(P^\star, P_1) = 0.05$ and $W_1(P^\star, P_2) = 0.5$, clearly distinguishes the first estimate to be much better than the second. In contrast, the total variation distance between both $P_1$ and $P_2$ to the truth is $1$ and the KL divergence to the truth in both cases is infinite. 

\subsection{Small sample regime}
We first focus on the regime where the number of observations per coin, $t = \mathcal{O}(\log{N})$. Consider the problem setup in Section~\ref{sec:problemsetup}. The following theorem gives a bound on the Wasserstein-1 distance between the MLE (Equation~\ref{eqn:MLEobjKL}) and the true underlying distribution.
\begin{theorem}[Small Sample Regime]
\label{thm:smalltMLE}
When $t = \mathcal{O}(\log{N})$, the Wassertein-1 distance between an optimal solution to the MLE, denoted by $\hP_{\mle}$ and the true underlying distribution $P^\star$ can be bounded with probability at least $1-2\delta$ as follows,
\begin{equation}
\label{eqn:EMDSmallt}
    W_1(P^\star, \hP_{\mle}) \leq \mathcal{O}_{\delta}\left( \frac{1}{t} \right).
\end{equation}
\end{theorem}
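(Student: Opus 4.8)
The plan is to exploit the dual (Kantorovich–Rubinstein) formulation of $W_1$: it suffices to show that for every $f \in \text{Lip}(1)$ on $[0,1]$ we have $|\mE_{P^\star}[f] - \mE_{\hP_{\mle}}[f]| = \mathcal{O}_\delta(1/t)$. The key idea is that a Lipschitz-1 function on $[0,1]$ can be approximated to accuracy $\mathcal{O}(1/t)$ by its degree-$t$ Bernstein polynomial $B_t f(y) = \sum_{s=0}^t f(s/t)\binom{t}{s} y^s (1-y)^{t-s}$, and $\mE_Q[B_t f]$ is \emph{linear in the expected fingerprint} $E_Q[\h]$: indeed $\mE_Q[B_t f] = \sum_{s=0}^t f(s/t) E_Q[h_s]$. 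So I would split $\mE_{P^\star}[f] - \mE_{\hP_{\mle}}[f]$ into three pieces: (i) $\mE_{P^\star}[f] - \mE_{P^\star}[B_t f]$, bounded by the classical Bernstein approximation error $\mathcal{O}(1/\sqrt t)$ — wait, this is only $\mathcal{O}(1/\sqrt{t})$, so a more careful argument is needed; (ii) $\sum_s f(s/t)\big(E_{P^\star}[h_s] - h_s^{\obs}\big)$, the sampling fluctuation of the fingerprint; and (iii) $\sum_s f(s/t)\big(h_s^{\obs} - E_{\hP_{\mle}}[h_s]\big)$, which is controlled because the MLE makes $E_{\hP_{\mle}}[\h]$ close to $\h^{\obs}$ in KL, hence (by Pinsker) in $\ell_1$.

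Since the naive Bernstein bound only gives $1/\sqrt t$, the refinement is to \emph{not} approximate $f$ globally but to use the structure of the problem. The better route: for the lower-order terms one observes that the fingerprint $\h^{\obs}$ itself is generated by \emph{some} distribution (the empirical distribution of the $p_i$'s, call it $\bar P = \frac1N\sum_i \delta_{p_i}$), and $W_1(P^\star,\bar P) = \mathcal{O}_\delta(1/\sqrt N)$ with high probability by a standard empirical-measure concentration bound in one dimension — negligible compared to $1/t$ in the regime $t = \mathcal{O}(\log N)$. So it suffices to bound $W_1(\bar P, \hP_{\mle})$. Now both $\bar P$ and $\hP_{\mle}$ are (nearly) consistent with the same fingerprint data, and the crucial analytic input is a quantitative statement that two distributions on $[0,1]$ whose degree-$t$ "binomial moments" (fingerprints) are close must be $\mathcal{O}(1/t)$-close in $W_1$. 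This is exactly where Proposition~\ref{prop:coeffbound} enters: writing the test function $f$ as (approximately) a polynomial and expanding that polynomial in the Bernstein basis, the coefficients are $f(s/t)$ up to controlled error, and the proposition bounds how large the Bernstein coefficients of a Lipschitz-1 polynomial approximant can be, which converts an $\ell_\infty$- or $\ell_1$-closeness of fingerprints into an $\mathcal{O}(1/t)$ bound on $\mE_{\bar P}[f] - \mE_{\hP_{\mle}}[f]$ after optimizing the polynomial degree (this is where $t = \mathcal{O}(\log N)$ is used: the coefficient blow-up is at most exponential in the degree, and $N$ samples suppress a $\mathrm{poly}(t) 2^{O(t)}/\sqrt N$ error term).

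Concretely the steps are: (1) reduce to bounding $W_1(\bar P,\hP_{\mle})$ via the $1/\sqrt N$ empirical concentration; (2) observe $\text{KL}(\h^{\obs}, E_{\hP_{\mle}}[\h]) \le \text{KL}(\h^{\obs}, E_{\bar P}[\h])$ by optimality of the MLE, and bound the right side (it is a KL between a distribution and its own expectation under resampling) to get $\|E_{\hP_{\mle}}[\h] - \h^{\obs}\|_1 = \mathcal{O}_\delta(\sqrt{\mathrm{stuff}})$ via Pinsker, hence $\|E_{\hP_{\mle}}[\h] - E_{\bar P}[\h]\|_1$ small; (3) for $f \in \text{Lip}(1)$, pick a near-optimal polynomial approximation $g$ of degree $d$ (to be optimized) with $\|f-g\|_\infty \le \mathcal{O}(1/d)$ by Jackson's theorem, write $g = \sum_s c_s \binom{t}{s} y^s(1-y)^{t-s}$ in the degree-$t$ Bernstein basis, and bound $|\mE_{\bar P}[g] - \mE_{\hP_{\mle}}[g]| \le \|\mathbf{c}\|_\infty \cdot \|E_{\bar P}[\h] - E_{\hP_{\mle}}[\h]\|_1$ using $\|\mathbf{c}\|_\infty \le \mathrm{poly}(t)$ from Proposition~\ref{prop:coeffbound}; (4) combine: $W_1(\bar P,\hP_{\mle}) \le \mathcal{O}(1/d) + \mathrm{poly}(t)\cdot \mathcal{O}_\delta(\text{fingerprint error})$, choose $d \asymp t$, and verify the fingerprint-error term is $\mathcal{O}(1/t)$ when $t \le c\log N$. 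The main obstacle is step (3)/(4): getting the Bernstein-coefficient bound of Proposition~\ref{prop:coeffbound} to be only polynomially (not exponentially) large in $t$ for the right class of polynomial approximants, and balancing it against the statistical error so that the final bound is genuinely $\mathcal{O}(1/t)$ rather than $\mathcal{O}(1/\sqrt t)$ — this is the heart of why the MLE beats the plug-in estimator.
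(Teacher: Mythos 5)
Your overall decomposition --- Kantorovich--Rubinstein duality, Bernstein/Chebyshev polynomial approximation of the Lipschitz test function, a concentration bound for the observed fingerprint against its mean, and the MLE-optimality-plus-Pinsker argument for the fingerprint mismatch --- is exactly the paper's route. The detour through the empirical measure $\bar{P}=\frac{1}{N}\sum_i\delta_{p_i}$ is a small variant: the paper instead bounds the sampling-fluctuation term $\sum_j b_j(h_j^{\obs}-E_{P^\star}[h_j])$ directly via McDiarmid (Lemma~\ref{lemma:concfingerprints}) and, in Lemma~\ref{lemma:mlefingerprint}, compares the MLE's KL against $E_{P^\star}[\h]$ (not $E_{\bar P}[\h]$), invoking a known concentration result for the empirical-versus-true KL of a multinomial. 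Both reorganizations should work.

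The one point you need to correct is the claim, repeated as your ``main obstacle,'' that you need $\|\mathbf{c}\|_\infty \leq \mathrm{poly}(t)$ from Proposition~\ref{prop:coeffbound}. You do not, and the proposition does not give that. For a degree-$t$ Bernstein-basis approximant with uniform error $\mathcal{O}(1/t)$, part 1 of Proposition~\ref{prop:coeffbound} gives $\max_j|b_j|\leq \sqrt{t}\,2^t$, i.e.\ \emph{exponential} in $t$. This is fine in the small-sample regime precisely because $t=\mathcal{O}(\log N)$ makes $2^t = N^{\mathcal{O}(1)}$: combining Lemmas~\ref{lemma:concfingerprints} and~\ref{lemma:mlefingerprint}, the estimation error is of order $\max_j|b_j|\cdot\sqrt{t\log N/N} \lesssim t\,2^t\sqrt{\log N / N}$, which for $t\leq c\log N$ with $c<1/(2\log 2)$ decays as a negative power of $N$ and is therefore $o(1/t)$. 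You actually state this correctly mid-proposal (``the coefficient blow-up is at most exponential in the degree, and $N$ samples suppress a $\mathrm{poly}(t)\,2^{O(t)}/\sqrt N$ error term''); the closing paragraph's demand for a polynomial coefficient bound is a red herring and contradicts that earlier, correct observation. Drop the requirement, plug in the exponential bound, and the argument closes exactly as in the paper.
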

For constant $\delta$, this $O\left(\frac{1}{t}\right)$ rate is information theoretically optimal due to the following result (Proposition 1 in ~\cite{Tian2017}):
\begin{proposition}[Lower Bound~\cite{Tian2017}]
Let $P$ denote a distribution over $[0, 1]$. Let $\mathbf{X} := \{X_i\}_{i=1}^N$ be random variables with $X_i \sim \text{Binomial}(t, p_i)$ where $p_i$ is drawn independently from $P$. Let $f$ be an estimator that maps $\mathbf{X}$ to a distribution $f(\mathbf{X})$. For every fixed $t$, the following lower bound holds for all $N$:
\begin{equation}
    \underset{f}{\text{inf}}\ \underset{P}{\text{sup}}\ \text{E}\left[W_1(P, f(\mathbf{X}))\right] > \frac{1}{4t}.
\end{equation}
\end{proposition}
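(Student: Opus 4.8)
The natural route is a two-point (Le Cam) argument whose two hypotheses are \emph{moment-matched}. The key structural fact is that the law of the data $\mathbf{X}$ depends on $P$ only through its first $t$ moments: for each $s$,
\[
\Pr[X_i = s] \;=\; \binom{t}{s}\sum_{j=0}^{t-s}(-1)^j\binom{t-s}{j}\,m_{s+j}(P),
\]
which involves only $m_1(P),\dots,m_t(P)$ (recall $m_0(P)=1$), and the $X_i$ are i.i.d.\ given $P$. So if I can exhibit $P_0 \ne P_1$ on $[0,1]$ with $m_j(P_0)=m_j(P_1)$ for all $1 \le j \le t$, then $\mathbf{X}$ has \emph{exactly} the same distribution whether the biases come from $P_0$ or from $P_1$, and this holds for every $N$ since $\mathbf{X}$ is just $N$ i.i.d.\ draws from a common categorical law $\mathbb{Q}$ on $\{0,\dots,t\}$. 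Then for any estimator $f$ the triangle inequality for $W_1$ gives
\[
\max_{k\in\{0,1\}}\mathbb{E}_{\mathbf X\sim\mathbb Q}\bigl[W_1(P_k,f(\mathbf X))\bigr]\;\ge\;\tfrac12\,\mathbb{E}_{\mathbf X\sim\mathbb Q}\bigl[W_1(P_0,f(\mathbf X))+W_1(P_1,f(\mathbf X))\bigr]\;\ge\;\tfrac12\,W_1(P_0,P_1),
\]
so $\inf_f\sup_P\mathbb E[W_1(P,f(\mathbf X))]\ge \tfrac12 W_1(P_0,P_1)$, and it suffices to make $W_1(P_0,P_1)$ a bit larger than $\tfrac1{2t}$.

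To construct an extremal pair I would parametrize by $g:=F_{P_0}-F_{P_1}$, the difference of CDFs. Integrating by parts, the constraint $m_j(P_0)=m_j(P_1)$ for $1\le j\le t$ is equivalent to $\int_0^1 x^{j-1}g(x)\,dx=0$ for $1\le j\le t$, i.e.\ $g\perp\mathcal P_{t-1}$; moreover $g$ vanishes at the endpoints, $g$ is realizable as a difference of genuine probability CDFs exactly when $\|g\|_{\mathrm{TV}}\le 2$ (decompose $g$ into positive and negative variation and pad with a common nondecreasing function), and $W_1(P_0,P_1)=\int_0^1|g|$. Thus the problem reduces to: maximize $\int_0^1 |g(x)|\,dx$ over $g\perp\mathcal P_{t-1}$ with $g(0^-)=g(1)=0$ and $\|g\|_{\mathrm{TV}}\le 2$. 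A bang-bang argument shows the optimizer is piecewise constant with exactly $t$ sign changes — a ``generalized square wave'' with $t+1$ plateaus whose breakpoints and heights are forced by the orthogonality conditions — and its $L^1$ mass is $\Theta(1/t)$, in fact at least $\tfrac1{2t}$, which feeds back to $\inf_f\sup_P\mathbb E[W_1(P,f(\mathbf X))]\ge\tfrac1{4t}$. (A less hands-on alternative is to take $dP_0-dP_1\propto L_{t+1}(2x-1)\,dx$, where $L_{t+1}$ is the degree-$(t+1)$ Legendre polynomial, whose shifted family is orthogonal to $\mathcal P_t$ on $[0,1]$; the identity $\int_{-1}^z L_{t+1}=\tfrac1{2t+3}\bigl(L_{t+2}(z)-L_t(z)\bigr)$ reduces $W_1(P_0,P_1)$ to a ratio of $L^1$-norms of Legendre polynomials, again $\Theta(1/t)$.)

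The main obstacle is purely quantitative: pinning down the \emph{sharp} leading constant. Both constructions give $W_1(P_0,P_1)=\Theta(1/t)$ with ease, but certifying $W_1(P_0,P_1)>\tfrac1{2t}$ — hence the strict bound $>\tfrac1{4t}$ — requires solving the breakpoint optimization above exactly (or evaluating the Legendre $L^1$-norms exactly) rather than only up to constants. In particular, in the degenerate case $t=1$ the optimal moment-matched pair is $P_0=\tfrac12\delta_0+\tfrac12\delta_1$ versus $P_1=\delta_{1/2}$, with $W_1(P_0,P_1)=\tfrac12$ meeting the bound with equality, so there the strict inequality has to be recovered from the extra $\Omega(1/\sqrt N)$ fluctuation of the empirical fingerprint, which is strictly positive for every finite $N$. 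Everything else — the indistinguishability reduction and the fact that the bound is independent of $N$ — is routine.
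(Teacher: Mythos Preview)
The paper does not supply its own proof of this proposition --- it is quoted directly from~\cite{Tian2017}. The closest the paper comes is Lemma~\ref{lemma:mmm} in Appendix~\ref{app:lowerboundproof} (also cited from~\cite{Tian2017}): for every $s$ there exist two distributions on $[0,1]$ with identical first $s$ moments and $W_1$ distance at least $\tfrac{1}{2s}$. Your plan is exactly the standard argument built on that lemma: the law of each $X_i$, being the expectation of a degree-$t$ polynomial in $p$, depends on $P$ only through $m_1(P),\dots,m_t(P)$; a moment-matched pair with $s=t$ therefore induces \emph{identical} laws for $\mathbf X$ for every $N$, and Le~Cam's two-point inequality yields $\inf_f\sup_P\mathbb E[W_1]\ge\tfrac12\cdot\tfrac{1}{2t}$. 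Your variational formulation of the extremal pair (CDF-difference $g\perp\mathcal P_{t-1}$, bang-bang optimizer with $t$ sign changes) and the Legendre alternative are both correct routes to the $\tfrac{1}{2t}$ separation, and are more explicit than the bare citation the paper relies on.

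One correction on the strict inequality. You are right that the two-point argument alone delivers only $\ge\tfrac{1}{4t}$, with $t=1$ saturated by the pair $\bigl(\tfrac12\delta_0+\tfrac12\delta_1,\ \delta_{1/2}\bigr)$. But your proposed remedy --- invoking the $\Omega(1/\sqrt N)$ fluctuation of the empirical fingerprint --- does not work: when $P_0$ and $P_1$ induce literally the same law on $\mathbf X$, there is no fingerprint fluctuation to exploit, for any $N$; and on data realizations that \emph{do} differ from the shared typical fingerprint, the estimator is free to output something other than a midpoint without violating the constraint on $P_0,P_1$. Upgrading to a strict ``$>$'' genuinely requires a multi-hypothesis argument (showing no estimator can be a $W_1$-midpoint of $P_0,P_1$ on the shared data law while also achieving $\le\tfrac{1}{4t}$ on every other prior), not a noise term. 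The paper, having outsourced the proof, does not address this either; for its purposes only the rate $\Omega(1/t)$ matters, so the distinction between ``$\ge$'' and ``$>$'' is cosmetic.
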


\subsection{Medium sample regime}
\label{sec:mediumsample}
In this section we consider the regime where the number of observations per coin $t$ is greater than $\Omega(\log{N})$. For the same setting as before (Section~\ref{sec:problemsetup}), the following theorem provides a bound on the Wasserstein-1 distance between the MLE solution and the true distribution.
\begin{theorem}[Medium Sample Regime]
\label{thm:mediumtMLE}
There exists $\epsilon > 0$, such that, for\\ $t \in \left[\Omega(\log{N}) ,\  \mathcal{O}\left({N^{2/9 - \epsilon}} \right) \right]$, with probability at least $1-2\delta$,
\begin{equation}
\label{eqn:EMDMediumt}
    W_1(P^\star, \hP_{\mle}) \leq \mathcal{O}_\delta\left(\frac{1}{\sqrt{t \log{N}}} \right) .
\end{equation}
\end{theorem}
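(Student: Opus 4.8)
The plan is to combine a statistical concentration step---showing that the expected fingerprint of $\hP_{\mle}$ must be $\ell_1$-close to that of $P^\star$---with an approximation-theoretic step that converts this fingerprint closeness into a $W_1$ bound through the Kantorovich--Rubinstein duality~\eqref{eqn:EMD}.

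For the statistical step, set $\bar{h}_s := \text{E}_{P^\star}[h_s] = \Pr[X_i = s]$ and $\bar{\h} = (\bar{h}_0,\dots,\bar{h}_t)$. Since the $X_i$ are i.i.d., $\h^{\obs}$ is the empirical distribution of $N$ i.i.d.\ samples from the distribution $\bar{\h}$ on $t+1$ symbols, so standard multinomial concentration (handling the cells with exponentially small $\bar{h}_s$ separately, since those are empty with high probability, and using $t = \mathcal{O}(N^{2/9})$) gives $\text{KL}(\h^{\obs},\bar{\h}) = \mathcal{O}_\delta(t/N)$. Because $P^\star \in \mathcal{D}$, optimality of the MLE in the form~\eqref{eqn:MLEobjKL} gives $\text{KL}(\h^{\obs}, \text{E}_{\hP_{\mle}}[\h]) \le \text{KL}(\h^{\obs},\bar{\h}) = \mathcal{O}_\delta(t/N)$, so Pinsker's inequality together with the triangle inequality yields $\|\text{E}_{\hP_{\mle}}[\h] - \bar{\h}\|_1 = \mathcal{O}_\delta(\sqrt{t/N})$.

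For the approximation step, by~\eqref{eqn:EMD} it suffices to bound $\text{E}_{P^\star}[f] - \text{E}_{\hP_{\mle}}[f]$ uniformly over $f \in \text{Lip}(1)$ (WLOG $\|f\|_\infty \le 1$). The key identity is $\text{E}_Q[h_s] = \text{E}_{y\sim Q}[b_{s,t}(y)]$ with $b_{s,t}(y) := \binom{t}{s} y^s (1-y)^{t-s}$ the degree-$t$ Bernstein basis, so for any coefficient vector $c = (c_0,\dots,c_t)$ and any distribution $Q$, $|\text{E}_Q[f] - \sum_s c_s \text{E}_Q[h_s]| \le \|f - \sum_s c_s b_{s,t}\|_\infty =: \eta(c)$. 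Applying this to $P^\star$ and to $\hP_{\mle}$ and using the statistical step,
\begin{align*}
\text{E}_{P^\star}[f] - \text{E}_{\hP_{\mle}}[f]
&\le 2\,\eta(c) + \Big| \sum_{s=0}^{t} c_s \big( \text{E}_{P^\star}[h_s] - \text{E}_{\hP_{\mle}}[h_s] \big) \Big| \\
&\le 2\,\eta(c) + \|c\|_\infty \cdot \mathcal{O}_\delta\!\big(\sqrt{t/N}\big).
\end{align*}
It then remains to choose $c$ so as to trade the sup-norm error $\eta(c)$ against the coefficient magnitude $\|c\|_\infty$: this is exactly the content of Proposition~\ref{prop:coeffbound}, which for every $n \le t$ produces a $c$ with $\eta(c) = \mathcal{O}(1/n)$ and $\|c\|_\infty = \exp(\mathcal{O}(n^2/t))$. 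Taking $n = \Theta(\sqrt{t\log N})$---legitimate, i.e.\ $\le t$, precisely because $t = \Omega(\log N)$, which is why the theorem needs this lower bound on $t$---gives $\eta(c) = \mathcal{O}(1/\sqrt{t\log N})$ and $\|c\|_\infty = \exp(\mathcal{O}(\log N)) = N^{\mathcal{O}(1)}$, so the error term is $N^{\mathcal{O}(1)}\sqrt{t/N}$; this is $\mathcal{O}(1/\sqrt{t\log N})$ exactly when $t$ lies below $N^{1/2-\Theta(1)}$, and $2/9$ is the value of this threshold once the constant from Proposition~\ref{prop:coeffbound} is inserted (with $\epsilon > 0$ absorbing lower-order losses). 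Taking the supremum over $f \in \text{Lip}(1)$ gives~\eqref{eqn:EMDMediumt}.

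The main obstacle is Proposition~\ref{prop:coeffbound}: quantitatively approximating an arbitrary $1$-Lipschitz function in sup norm by a Bernstein-basis combination with controlled coefficients. Both naive choices are too weak---the Bernstein operator $c_s = f(s/t)$ has $\|c\|_\infty \le 1$ but only $\eta = \mathcal{O}(1/\sqrt t)$, while the best degree-$t$ polynomial has $\eta = \mathcal{O}(1/t)$ but Bernstein coefficients as large as $e^{\Omega(t)}$; and degree elevation from a low degree cannot shrink $\|c\|_\infty$---so one must genuinely exploit all $t+1$ coefficients. The construction I would use: first band-limit $f$ to ``frequency'' $\sim n$ (a degree-$\mathcal{O}(n)$ Jackson/Chebyshev truncation, which costs only $\mathcal{O}(1/n)$ for $f \in \text{Lip}(1)$), then deconvolve the binomial smoothing kernel $y \mapsto b_{s,t}(y)$, whose width is $\Theta(1/\sqrt t)$, so that reproducing a frequency-$n$ feature of $f$ forces coefficients amplified by $\exp(\Theta(n^2/t))$; making this rigorous---keeping the sup-norm error $\mathcal{O}(1/n)$, the coefficients explicitly bounded, and tracking the constant on which the admissible range of $t$ depends---is the technical heart. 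A secondary point is ensuring the multinomial KL-concentration is robust to cells of exponentially small probability so that the bound $\text{KL}(\h^{\obs},\bar{\h}) = \mathcal{O}_\delta(t/N)$ truly holds.
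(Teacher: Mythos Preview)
Your proposal is correct and follows the paper's approach essentially step for step: the same Bernstein-basis decomposition of $W_1$ via duality, the same combination of MLE optimality, Pinsker, and multinomial KL concentration for the fingerprint error (the paper cites \cite{mardia2018concentration}, picking up a harmless extra $\log(N/t)$ factor), and the same choice of approximation degree $\Theta(\sqrt{t\log N})$ in Proposition~\ref{prop:coeffbound}. One small correction: you write $\|c\|_\infty=\exp(\mathcal{O}(n^2/t))$, but Proposition~\ref{prop:coeffbound} as stated carries a $\sqrt{k}\,(t+1)$ prefactor, and it is this $t$-dependent prefactor---not a constant in the exponent, which can be made arbitrarily small by scaling $k$---that turns $t$ into $t^{9/4}$ in the balancing inequality and degrades the admissible range from your claimed $N^{1/2-\Theta(1)}$ down to $N^{2/9-\epsilon}$ (the paper itself conjectures in Section~\ref{sec:coeffboundconj} that the prefactor is spurious).
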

We prove a matching $\Omega(\frac{1}{\sqrt{t\log N}})$ lower bound on the minimax rate for estimating the population of parameters under Wasserstein-1 distance. The lower bound is formalized in the following theorem.
\begin{theorem}\label{thm:lowerboundtlogN}
Let $P$ be a distribution over $[0, 1]$. Let $\mathbf{X} := \{X_i\}_{i=1}^N$ be random variables with $X_i \sim \text{Binomial}(t, p_i)$ where $p_i$ is drawn independently from $P$. Let $f$ be an estimator that maps $\mathbf{X}$ to a distribution $f(\mathbf{X})$. For every $t, N$ s.t. $t\le \frac{N^{2(e^4-1)}}{36}$, the following lower bound holds:
\begin{equation}
    \underset{f}{\text{inf}}\ \underset{P}{\text{sup}}\ \text{E}\left[W_1(P, f(\mathbf{X}))\right] > \frac{1}{3e^4\sqrt{t\log N}}.
\end{equation}
\end{theorem}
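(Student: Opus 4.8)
The plan is to establish this minimax lower bound via Le Cam's two‑point method combined with a moment‑matching construction, in the spirit of the lower bounds for estimating symmetric properties of discrete distributions. Fix $t$ and $N$. Conditioned on a prior distribution $P$ on $[0,1]$, the data $\mathbf{X}=(X_1,\dots,X_N)$ is an i.i.d.\ sample of size $N$ from the single‑observation mixture $m_P$ on $\{0,\dots,t\}$ given by $m_P(s) := \binom{t}{s}\int_0^1 y^{s}(1-y)^{t-s}\,dP(y)$. Hence it suffices to produce two priors $P_0,P_1$ on $[0,1]$ with (i) a large Wasserstein gap $W_1(P_0,P_1)\ge 2r$ where $r=\Omega\!\big(\tfrac{1}{\sqrt{t\log N}}\big)$, and (ii) statistically indistinguishable data, $\text{TV}\big(m_{P_0}^{\otimes N},m_{P_1}^{\otimes N}\big)\le \tfrac12$. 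The standard two‑point inequality then gives, for every estimator $f$, $\sup_{P}\text{E}[W_1(P,f(\mathbf{X}))] \ge r\big(1-\text{TV}(m_{P_0}^{\otimes N},m_{P_1}^{\otimes N})\big)\ge r/2$, which is the assertion up to the explicit constant. For (ii) I would pass to the per‑observation $\chi^2$‑divergence via Pinsker and tensorization, $\text{TV}\big(m_{P_0}^{\otimes N},m_{P_1}^{\otimes N}\big)\le \sqrt{\tfrac{N}{2}\,\chi^2(m_{P_0}\,\|\,m_{P_1})}$, so that it is enough to arrange $\chi^2(m_{P_0}\,\|\,m_{P_1})=O(1/N)$.

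For the construction I would start from a classical extremal pair: for each integer $k$ there exist probability measures $\nu_0,\nu_1$ on $[-1,1]$ with identical moments of all orders $\le k$ and $W_1(\nu_0,\nu_1)\ge c_0/k$ — e.g.\ atomic measures placed on the interlacing extrema of the degree‑$(k+1)$ Chebyshev polynomial with the unique weights making the first $k$ moments agree. Fixing a scale $\gamma\in(0,\tfrac14]$, let $P_j$ be the pushforward of $\nu_j$ under $u\mapsto \tfrac12+\gamma u$, possibly mixed with a small fixed mass on an ``anchor'' distribution so that $m_{P_1}(s)$ stays bounded below for every $s\in\{0,\dots,t\}$; then $W_1(P_0,P_1)\gtrsim \gamma/k$. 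The crux is the estimate
\[
\chi^2\big(m_{P_0}\,\|\,m_{P_1}\big)\ \le\ \frac{(C\gamma\sqrt{t}\,)^{2(k+1)}}{(k+1)!}\cdot\mathrm{poly}(t),
\]
which I would prove as follows: since the first $k$ moments of $P_0$ and $P_1$ coincide, each difference $m_{P_0}(s)-m_{P_1}(s)$ is a linear combination of moment‑differences of order $\ge k+1$, and expanding $y^{s}(1-y)^{t-s}$ about $y=\tfrac12$ (where $\text{Binomial}(t,y)$ has a Gaussian‑like $\sqrt{t}$ spread) produces coefficients that, after the $m_{P_1}(s)$ normalization, sum to the claimed bound with the crucial $(k+1)!$ in the denominator; a Poissonization / Hermite‑coefficient calculation gives the same gain. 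Plugging this into (ii), the budget $N\chi^2=O(1)$ permits taking $\gamma\asymp\sqrt{k/t}$ (capped at the constant $\tfrac14$), and then $W_1(P_0,P_1)\gtrsim \gamma/k\asymp 1/\sqrt{kt}$ is maximized at $k\asymp\log N$, giving $W_1(P_0,P_1)=\Omega\!\big(\tfrac{1}{\sqrt{t\log N}}\big)$ as desired. This argument needs $t$ above a suitable constant multiple of $\log N$ for the scale $\gamma$ to fit inside $[0,1]$; in the complementary range $t=O(\log N)$ the bound $\Omega(1/t)$ of~\cite{Tian2017} is already at least $\tfrac{1}{3e^4\sqrt{t\log N}}$, so the two cases together cover the stated range, and carrying the constants through the $\chi^2$ estimate and the regime split (where the $\mathrm{poly}(t)$ factor forces $t$ to be polynomially bounded in $N$) produces both the explicit factor $\tfrac{1}{3e^4}$ and the mild requirement $t\le N^{2(e^4-1)}/36$.

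The main obstacle is the $\chi^2$ estimate. Two points make it delicate. First, one must lower‑bound the fingerprint probabilities $m_{P_1}(s)$ uniformly in $s$: for $s$ near $t$ these are exponentially small, and one must show $(m_{P_0}(s)-m_{P_1}(s))^2$ is smaller still, which is exactly what the anchor component and a careful direct comparison (rather than a crude $\text{Binomial}$‑to‑Gaussian swap, whose error is far too large for the $O(1/N)$ budget) are for. Second, the $(k+1)!$ gain must be extracted with an explicit, non‑asymptotic constant valid for all $t$ in the claimed range — this is what couples the admissible range of $t$ to $N$ and ultimately yields the $e^4$. The remaining ingredients — the extremal moment‑matched pair, the two‑point inequality, and the $\chi^2$ tensorization — are standard.
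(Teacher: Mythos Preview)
Your overall architecture matches the paper's: Le Cam two-point, a moment-matched pair supported on an interval of half-width $\gamma\asymp\sqrt{\log N/t}$ centered at $1/2$, with $k\asymp\log N$ matched moments so that $W_1(P_0,P_1)\gtrsim\gamma/k\asymp 1/\sqrt{t\log N}$, and then an indistinguishability bound. The paper takes exactly $\gamma=\sqrt{\log N/t}$ and $k=L:=e^4\log N$.

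Where the paper differs is in the indistinguishability step, and its route is substantially simpler than your $\chi^2$ plan. Instead of controlling $\chi^2(m_{P_0}\,\|\,m_{P_1})$ (which, as you note, forces an anchor component and a uniform lower bound on $m_{P_1}(s)$), the paper bounds $\text{TV}(m_{P_0},m_{P_1})$ \emph{directly}: Taylor-expand each Bernstein polynomial $B_j(p)=\binom{t}{j}p^j(1-p)^{t-j}$ at $p=\tfrac12$, so that the first $L$ terms cancel by moment matching, and then bound the tail using the clean estimate
\[
\sum_{j=0}^t\frac{|B_j^{(k)}(1/2)|}{k!}\ \le\ \frac{\sqrt{t}\,e^{k}\,t^{k/2}}{k^{k/2}},
\]
which they obtain by recognizing $B_j^{(k)}(1/2)/k!$ as (up to sign and a $2^{-t}\binom{t}{k}$ factor) a Kravchuk polynomial and bounding the $\ell_2$ norm of the Kravchuk coefficients via their generating function $(1+z)^{t-k}(1-z)^k$ on the unit circle. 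Summing over $k>L$ with $|p-\tfrac12|\le\gamma$ gives $\text{TV}(m_{P_0},m_{P_1})\le 2\sqrt{t}/N^{e^4}$, and simple subadditivity $\text{TV}(P^{\otimes N},Q^{\otimes N})\le N\cdot\text{TV}(P,Q)$ finishes (no Pinsker, no $\chi^2$ tensorization). This sidesteps both delicate points you flagged: there is no division by $m_{P_1}(s)$ anywhere, so no anchor is needed, and the $k^{k/2}$ gain (your ``$(k+1)!$'') drops out of the Kravchuk generating-function bound with explicit constants, which is where the $e^4$ comes from. Your $\chi^2$ route should also work, but it is a detour.
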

This lower bound, combined with the $\Theta(\frac{1}{t})$ lower bound shown in~\cite{Tian2017}, implies that the MLE is minimax optimal up to a constant factor in both the regimes. 

\begin{remark} [\textbf{Conjecture}]
We believe that the range of $t$ for which the bound in Equation~\ref{eqn:EMDMediumt} holds is larger than that guaranteed in Theorem~\ref{thm:mediumtMLE}. With the current proof framework, it seems likely that the interval of $t$ in which Theorem~\ref{thm:mediumtMLE} holds can be improved to $$t \in \left[\Omega(\log{N}),\  \mathcal{O}\left({N^{2/3 - \epsilon}} \right)\right].$$
Details on why we believe that this interval should hold are described in Section~\ref{sec:coeffboundconj}.
\end{remark}

\begin{remark} [\textbf{Local Moment Matching}]\label{remark:localmoments} The moment matching estimator in~\cite{Tian2017} fails when  $t$ is larger than $\Omega(\log N)$ because the $t$-th order moments cannot be estimated accurately in that regime. This causes the second term in the error bound $\mathcal{O}\left( \frac{1}{t} \right) + \mathcal{O}_\delta\left( 2^t t \sqrt{\frac{\log{t}}{N}}  \right)$ to become large. Naturally, one might consider matching only the first $\log N$ moments which can be reliably estimated. In addition, the parameter interval $[0,1]$ can be split into blocks, and the moment matching can be done in each block locally by utilizing the fact that for large $t$, $X_i/t$ tightly concentrates around $p_i$. The local moment matching was first introduced in a recent work by~\cite{Han2018} in the setting of learning discrete distributions.
Potentially, one may apply the local moment matching approach to our setting of learning populations of parameters which will likely yield an algorithm that achieves the same Wasserstein-1 distance error as the MLE, $\mathcal{O}(\max(\frac{1}{\sqrt{t\log N}},\frac{1}{t}))$ in the $t\ll N$ regime. The algorithm will degenerate to the one developed in~\cite{Tian2017} in the $t=\mathcal{O}(\log N)$ regime.
However, from a practical perspective, the local moment matching algorithm is quite unwieldy. It involves significant parameter tuning and special treatment for the edge cases. Some techniques used in local moment matching, e.g. using a fixed blocks partition of $[0,1]$ and matching the first $\log N$ moments for all the blocks, are quite crude and likely lose large constant factors both in theory and in practice. Therefore, we expect the local moment matching to have inferior performance than the MLE approach in practice. We include a brief sketch of how one may apply the local moment matching approach to our setting in Appendix~\ref{app:localmomentmatching}.
\end{remark}

\begin{remark}[\textbf{Empirical Estimator}]
The naive ``plug-in'' estimator for the underlying distribution is the sorted estimates of the biases of the coins. This incurs an error of $\mathcal{O}\left(\frac{1}{\sqrt{t}}\right) + \mathcal{O}\left(\frac{1}{\sqrt{N}}\right)$ in the earth movers distance (or $l_1-$ distance between the estimated and the true CDFs), where the first term is due to the error in estimating the biases of the coins from $t$ outcomes, and the second term is due to estimating the error in the estimated CDF using $N$ coins. If the number of tosses per coin is very large, that is, $t \gg N$, then we can estimate individual biases pretty well, and obtain an empirical CDF that can estimate $P^\star$ incurring an overall error rate of  $\mathcal{O}\left(\frac{1}{\sqrt{N}}\right)$. However, in the regime of interest, the number of observations per coin is small,  i.e., $t \ll N$ (\emph{sparse regime}). The empirical estimates of the biases in this regime are very crude. Thus, when $t$ is small, even with a very large population (large $N$), the empirical estimator does not perform better on the task of estimating the underlying distribution than on estimating the biases itself which incurs a $\Theta\left(\frac{1}{\sqrt{t}}\right)$ error.
\end{remark}
\section{Proof Sketches}
\label{sec:proofsketches}
In this section we provide proof sketches for the main results stated in Section~\ref{sec:mainresults}. The details are provided in the appendix.
\subsection{Bound on Wasserstein-1 distance}
Proofs of Theorems~\ref{thm:smalltMLE} and~\ref{thm:mediumtMLE} involve bounding the Wasserstein-1 distance between the true distribution $P^\star$ and the MLE estimate $\hP_{\mle}$.
Recall the dual definition of Wasserstein-1 distance or the earth movers distance between two distributions $P$ and $Q$ supported on $[0, 1]$,
\begin{equation*}
W_1(P, Q) = \underset{f \in \text{Lip}(1)}{\text{sup}} \int_{0}^1 f(x) (p(x) - q(x)) dx,  
\end{equation*}
where $p$ and $q$ are the probability density functions of the distributions $P$ and $Q$ respectively, and $\text{Lip}(1)$ denotes the set of Lipschitz-1 functions.
Any Lipschitz-1 function $f$ on $[0, 1]$ can be approximated using Bernstein polynomials as, $\hat{f}(x) := \sum_{j=0}^t b_j \binom{t}{j} x^j (1-x)^{t-j}$.
Using this approximation for any Lipschitz-1 function $f$, we obtain the following bound,
\begin{IEEEeqnarray}{rCl}
\int_{0}^1 f(x) (p(x) - q(x)) dx 
&=&  \int_{0}^1 \left( f(x) -\hat{f}(x) \right)(p(x) - q(x)) dx +  \int_{0}^1 \hat{f}(x) (p(x) - q(x)) dx,\nonumber \\
&\leq& 2 ||f - \hat{f}||_\infty  
+ \int_{0}^1 \sum_{j=0}^t b_j \binom{t}{j} x^j (1-x)^{t-j} (p(x) - q(x)) dx,   \nonumber \\
&=& 2 ||f - \hat{f}||_\infty +  \sum_{j=0}^t b_j \left( \text{E}_P[h_j] - \text{E}_Q[h_j] \right),
\label{eqn:boundEMDIntegralFingerprint}
\end{IEEEeqnarray}
where $||f - \hat{f}||_\infty := \underset{x \in [0, 1]}{\max}|f(x) - \hat{f}(x)|$ is the polynomial approximation error.
Therefore, the Wasserstein-1 distance (Definition~\ref{eqn:EMD}) between the true distribution $P^\star$ and MLE estimate $\hP_{\mle}$ can be bounded as follows,
\begin{IEEEeqnarray}{rCl}
&&W_1(P^*, \hP_{\mle}) \nonumber\\
&&\leq \underset{f \in \text{Lip}(1)}{\text{sup}} \left\{ 2 \underset{(a)}{\underbrace{||f - \hat{f}||_\infty}} 
+ \underset{(b)}{\underbrace{ \sum_{j=0}^t b_j \left( \text{E}_{P^*}[h_j] - h_j^{\text{obs}} \right)}} 
 +  \underset{(c)}{\underbrace{\sum_{j=0}^t b_j \left( h_j^{\text{obs}} - \text{E}_{\hP_{\mle}}[h_j] \right)}}  \right\} \label{eqn:boundEMDFingerprintEst}
\end{IEEEeqnarray}
The first term $(a)$ in the above bound (Equation~\ref{eqn:boundEMDFingerprintEst}) is the approximation error for using Bernstein polynomials to approximate Lipschitz-1 functions. The second term $(b)$ is the error due to sampling. The third term $(c)$ is the estimation error in matching the fingerprints.

\subsection{Concentration of fingerprints}
\label{sec:concFingerprints}
We bound the second term in Equation~\ref{eqn:boundEMDFingerprintEst} using the following lemma.
\begin{lemma}\label{lemma:concfingerprints}
With probability at least $1-\delta$, 
\begin{IEEEeqnarray}{rCl}
 \left| \sum_{j=0}^t b_j \left(h_j^{\text{obs}} - \text{E}_{P^*}[h_j] \right) \right| \leq \mathcal{O}\left( \max_j |b_j| \sqrt{\frac{\log{1/\delta}}{N}}\right).
\end{IEEEeqnarray}
\end{lemma}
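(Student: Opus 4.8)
The plan is to view the sum $\sum_{j=0}^t b_j(h_j^{\obs} - \tE_{P^\star}[h_j])$ as a deviation of a function of the i.i.d.\ observations $X_1,\dots,X_N$ from its expectation, and to apply a bounded-difference (McDiarmid / Azuma--Hoeffding) argument. Write $g(X_1,\dots,X_N) := \sum_{j=0}^t b_j h_j^{\obs} = \frac{1}{N}\sum_{i=1}^N \left(\sum_{j=0}^t b_j \one_{\{X_i=j\}}\right) = \frac{1}{N}\sum_{i=1}^N b_{X_i}$. Since each $X_i$ is independent and $\tE[b_{X_i}] = \sum_j b_j \Pr[X_i = j]$, averaging over the draw of $p_i \sim P^\star$ gives $\tE[g] = \sum_j b_j \tE_{P^\star}[h_j]$, so the quantity in the lemma is exactly $g - \tE[g]$.

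The key observation is that $g$ is an average of $N$ independent terms $b_{X_i}$, each bounded in absolute value by $\max_j |b_j| =: B$. Hence each term $b_{X_i}/N$ lies in an interval of length at most $2B/N$, and changing a single coordinate $X_i$ changes $g$ by at most $2B/N$. Applying McDiarmid's inequality (or directly Hoeffding's inequality for the sum of independent bounded random variables $b_{X_i}$, which is cleaner here) yields
\begin{equation*}
\Pr\left[ \left| \sum_{j=0}^t b_j\left(h_j^{\obs} - \tE_{P^\star}[h_j]\right) \right| \geq \tau \right] \leq 2\exp\left( -\frac{N\tau^2}{2B^2} \right).
\end{equation*}
Setting the right-hand side equal to $\delta$ and solving for $\tau$ gives $\tau = B\sqrt{\frac{2\log(2/\delta)}{N}} = \mathcal{O}\!\left( \max_j |b_j| \sqrt{\frac{\log(1/\delta)}{N}} \right)$, which is the claimed bound. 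One subtlety worth stating carefully: the randomness is over both the draw of the biases $p_i$ and the Bernoulli trials, but since the pairs $(p_i, X_i)$ are i.i.d.\ across $i$ and $b_{X_i}$ depends only on $X_i$, the variables $b_{X_1},\dots,b_{X_N}$ are i.i.d.\ with common mean $\sum_j b_j \tE_{P^\star}[h_j]$, so Hoeffding applies directly with no conditioning gymnastics needed.

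I do not expect a serious obstacle here; this is a routine concentration argument and the only thing to be a little careful about is getting the right constant in the boundedness range (using $|b_{X_i}| \le B$ gives range $2B$, not $B$) and making explicit that $b_{X_i}$ is well-defined only because $X_i \in \{0,1,\dots,t\}$ always. If one wanted a slightly tighter bound one could use a Bernstein-type inequality with the variance $\mathrm{Var}(b_{X_i}) \le B^2$, but since the variance bound $B^2$ is all that is readily available without more information about the $b_j$'s and about $P^\star$, the Hoeffding bound already gives the stated rate and there is no need to do so. The real work of the paper lies elsewhere — in bounding $\max_j |b_j|$ for the Bernstein coefficients of a Lipschitz-$1$ function (Proposition~\ref{prop:coeffbound}) and in controlling term $(c)$ — but for this particular lemma the argument above suffices.
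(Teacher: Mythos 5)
Your proof is correct, and it is actually cleaner than the paper's. Both arguments are bounded-difference concentration applied to the fingerprint sum, but the paper formulates it as McDiarmid's inequality over individual Bernoulli tosses $Y_s^{(i)}$: it re-draws a single coin toss, observes that $X_i$ can change by at most $\pm 1$, and deduces a bounded difference of $2\max_j|b_j|/N$. Applied literally to the $Nt$ underlying Bernoulli variables, that argument would give a denominator $\sum_i c_i^2 = 4t\,(\max_j|b_j|)^2/N$ and hence lose a factor of $\sqrt{t}$ relative to the bound the paper states; the clean way to recover the stated $t$-free bound is to apply the bounded-difference inequality at the level of the $N$ variables $X_1,\dots,X_N$ (each of which changes $\phi$ by at most $2\max_j|b_j|/N$ no matter how it is perturbed), which is exactly what your argument does by writing $g = \frac{1}{N}\sum_i b_{X_i}$ and invoking Hoeffding on $N$ i.i.d.\ terms bounded in $[-\max_j|b_j|, \max_j|b_j|]$. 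You also handle the double randomness (over $p_i$ and the Bernoulli trials) more explicitly, by noting the pairs $(p_i, X_i)$ are i.i.d.\ so $b_{X_i}$ are i.i.d.\ with the right mean. The only thing to double-check is the Hoeffding constant: with range $2\max_j|b_j|$ the exponent is $-N\tau^2/(2\max_j|b_j|^2)$, which you have correctly, giving $\tau = \mathcal{O}(\max_j|b_j|\sqrt{\log(1/\delta)/N})$ as claimed.
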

\begin{proof}
Recall that $h_s^{\text{obs}}$ is the fraction of the population that sees $s$ heads out of $t$ tosses and $\text{E}_{P^*}[h_s]$ is the expected fingerprint under the true distribution which is exactly $\text{E}[h_s^{\text{obs}}]$, and we will use $\text{E}[h_s^{\text{obs}}]$ and $\text{E}_{P^*}[h_s]$ interchangeably. Define, $\phi(X) := \sum_{s=0}^t b_s \left( h_s^{\text{obs}} - \text{E}[h_s^{\text{obs}}] \right)$, that is,
\begin{IEEEeqnarray*}{rCl}
    \phi(X) = \frac{1}{N}\sum_{i=1}^N \sum_{s=0}^t b_s \left( \mathbbm{1}_{\{X_i = s\}} - \text{E}[h_s^{\text{obs}}] \right).
\end{IEEEeqnarray*}
Note that $\text{E}[\phi(X)] = 0$. Note that $X_i$ is a sum of $t$ independent Bernoulli random variables,
\[ X_i := \sum_{s=0}^t Y_s^{(i)}, \]
where $Y_s^{(i)} \sim \text{Bernoulli}(p_i)$. 
Let $\phi_{i'}(X)$ be $\phi$ with one of the $t$  tosses of coin $i$ being re-drawn, say $Y_s^{(i)'} \sim \text{Bernoulli}(p_i)$. Let $X_{i}^{'}$ denote the number of heads seen by coin $i$ with the replaced coin toss. Note that $X_{i}^{'}$ can differ from $X_i$ by at most $\pm 1$. Therefore, we can bound the difference in $\phi(X)$ and $\phi_{i'}(X)$ as follows,
\[ | \phi(X) - \phi_{i'}(X)  | \leq \underset{0 \leq s \leq t}{\max}\ |b_s| \frac{2}{N}.  \]
By McDiarmid's Inequality, for some absolute constants $C,  c > 0$,
$$
    \text{Pr}\left( | \phi(X) |  \geq \epsilon \right) 
 \leq  C\exp{\left( - \frac{c N \epsilon^2}{4 \left(\max_{s} |b_s|\right)^2} \right)}. 
$$
Hence, with probability at least $1-\delta$, 
\begin{IEEEeqnarray}{rCl}
 \left| \sum_{j=0}^t b_j \left(h_j^{\text{obs}} - \text{E}_{P^*}[h_j] \right) \right| \leq \mathcal{O}\left( \max_j |b_j| \sqrt{\frac{\log{1/\delta}}{N}}\right).
\end{IEEEeqnarray}
\end{proof}

\subsection{Bound on error term due to MLE}
\label{sec:boundErrMLE}
We bound the third term in Equation~\ref{eqn:boundEMDFingerprintEst} using the following lemma.
\begin{lemma}\label{lemma:mlefingerprint}
For $3 \leq t \leq \sqrt{C_0 N} + 2$, where $C_0 > 0$ is a constant, w. p. $1 - \delta$,
\begin{IEEEeqnarray}{rCl}
\left| \sum_{j=0}^t b_j \left(h_j^{\text{obs}} - \text{E}_{P_\mle}[h_j] \right) \right| 
    &\leq& \max_j |b_j|\ \sum_{j=0}^t \left| \left(h_j^{\text{obs}} - \text{E}_{P_\mle}[h_j] \right) \right|,\nonumber\\
    &\leq& \max_j |b_j|\ \sqrt{ 2\ \text{ln} 2 } \sqrt{\frac{t}{2N} \log{\frac{4N}{t}} + \frac{1}{N} \log{\frac{3e}{\delta}} }.
\end{IEEEeqnarray} 
\end{lemma}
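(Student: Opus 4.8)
\textbf{Proof proposal for Lemma~\ref{lemma:mlefingerprint}.}

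The first inequality is immediate: pull out $\max_j |b_j|$ and apply the triangle inequality to $\sum_j b_j(h_j^{\obs} - \tE_{P_\mle}[h_j])$. So the entire content is the second inequality, namely an $\ell_1$ bound on the discrepancy between the observed fingerprint and the expected fingerprint of the MLE solution. The plan is to exploit the variational characterization~\eqref{eqn:MLEobjKL}: $\hP_\mle$ minimizes $\KL(\h^{\obs}, \tE_Q[\h])$ over $Q \in \mathcal{D}$, so in particular it does at least as well as any \emph{specific} comparison distribution. The natural comparison is the true distribution $P^\star$, giving $\KL(\h^{\obs}, \tE_{\hP_\mle}[\h]) \le \KL(\h^{\obs}, \tE_{P^\star}[\h])$. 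By Pinsker's inequality, $\|\h^{\obs} - \tE_{\hP_\mle}[\h]\|_1 \le \sqrt{2\ln 2 \cdot \KL(\h^{\obs}, \tE_{\hP_\mle}[\h])}$ (the $\ln 2$ appearing because the KL in the objective is written with $\log$ base $2$, or more safely one just carries whatever constant the paper's convention forces), and thus it suffices to show $\KL(\h^{\obs}, \tE_{P^\star}[\h]) \le \frac{t}{2N}\log\frac{4N}{t} + \frac1N\log\frac{3e}{\delta}$ with probability $1-\delta$.

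So the crux is a high-probability bound on $\KL(\h^{\obs}, \tE_{P^\star}[\h])$, the KL divergence between the empirical fingerprint (an average of $N$ i.i.d.\ draws from the multinomial-type distribution on $\{0,1,\dots,t\}$ induced by $P^\star$) and its mean $\tE_{P^\star}[\h]$. This is a standard ``empirical distribution concentrates in KL'' estimate. The clean way is the method-of-types / Sanov-type bound: for $N$ i.i.d.\ samples from a distribution $\pi$ over an alphabet of size $k = t+1$, $\Pr[\KL(\hat\pi_N, \pi) > \epsilon] \le \binom{N+k-1}{k-1} e^{-N\epsilon}$ (this is the Csiszár--Körner / Weissman-type bound, not requiring $\pi$ to be uniform), and hence $\binom{N+t}{t} e^{-N\epsilon} \le \delta$. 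Bounding $\binom{N+t}{t} \le \left(\frac{e(N+t)}{t}\right)^t \le \left(\frac{4eN}{t}\right)^t$ (valid for $t \le N$, say, which holds since $t \le \sqrt{C_0 N}+2$) and taking logs gives $N\epsilon \le t\log\frac{4eN}{t} + \log\frac1\delta = t\log\frac{4N}{t} + t + \log\frac1\delta$. Dividing by $N$ and absorbing the $t/N$ term — here is where $t \le \sqrt{C_0 N}+2$ matters, to control that additive slack — yields $\epsilon \le \frac{t}{N}\log\frac{4N}{t} + \frac1N\log\frac{3e^{\cdots}}{\delta}$, up to matching the exact constants ($\frac12$ vs $1$ on the first term, and the $3e$) by being slightly more careful with the binomial estimate. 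I would double-check whether the $\frac{t}{2N}$ (with the $\tfrac12$) as opposed to $\frac{t}{N}$ comes from a tighter binomial bound $\binom{N+t}{t} \le (\ldots)^{t/2}$-style trick or from the $\ln 2$ factor being split off — the cleanest route is probably to state the types bound with the exact lattice count and optimize.

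The main obstacle is getting the constants exactly as claimed: the interplay between (i) Pinsker's constant (and whether KL is base-$2$ or base-$e$, which is why $\sqrt{2\ln 2}$ appears out front rather than $\sqrt{2}$), (ii) the precise upper bound used for the type-count $\binom{N+t}{t}$, and (iii) the range restriction $3 \le t \le \sqrt{C_0 N}+2$, which must be exactly what is needed to fold the ``$+t$'' slack from $\log\binom{N+t}{t}$ into the stated $\frac1N\log\frac{3e}{\delta}$ term. None of these steps is deep; the work is purely in bookkeeping. A secondary point worth verifying is that Pinsker applies cleanly here even though $\h^{\obs}$ may be a point for which $\tE_{P^\star}[h_j]$ has some zero coordinates — but since $\tE_{P^\star}[h_j] = \int \binom{t}{j} y^j(1-y)^{t-j} dP^\star(y)$ is zero only if $P^\star$ charges only $\{0\}$ or $\{1\}$ in a degenerate way, and in that case $h_j^{\obs}$ is also zero almost surely for the offending $j$, the KL is finite and the argument goes through; alternatively one restricts attention to the support of $\tE_{P^\star}[\h]$ throughout, which is the coordinate set on which both vectors live.
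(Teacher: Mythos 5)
Your proof is structurally identical to the paper's: both use the MLE optimality $\KL(\h^{\obs}, \tE_{\hP_\mle}[\h]) \le \KL(\h^{\obs}, \tE_{P^\star}[\h])$, Pinsker's inequality (with the $\sqrt{2\ln 2}$ from base-$2$ logs), and then a high-probability bound on $\KL(\h^{\obs}, \tE_{P^\star}[\h])$, treating $\h^{\obs}$ as the empirical distribution of $N$ i.i.d.\ draws over the alphabet $\{0,\dots,t\}$. The only divergence is in that last step, and you correctly flag where you're off: the classical method-of-types bound $\Pr[\KL(\hat\pi_N,\pi)>\epsilon]\le\binom{N+t}{t}e^{-N\epsilon}$ has a prefactor polynomial in $N$ of degree $t$, which after taking logs yields $\frac{t}{N}\log\frac{4N}{t}$ rather than the claimed $\frac{t}{2N}\log\frac{4N}{t}$. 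The paper instead invokes the sharper concentration inequality of Mardia, Jiao, T\'anczos, Nowak, and Weissman (\texttt{mardia2018concentration}), whose prefactor grows like $N^{(k-1)/2}$ (degree $t/2$ in our alphabet of size $t+1$), which is exactly what produces the halved coefficient and also accounts for both the $\log\frac{3e}{\delta}$ form of the additive term and the technical restriction $3\le t\le \sqrt{C_0 N}+2$ (the latter is inherited verbatim from the hypothesis of that theorem). So your route gives the lemma up to a factor of $2$ in the leading term; to recover the stated constants one should simply cite that stronger concentration result rather than the types bound. Your side remark about Pinsker and zero coordinates is a reasonable sanity check and resolves the same way the paper (implicitly) does.
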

\begin{proof}
Let $P_{\mle}$ be an optimal solution to the MLE and $P^\star$ be the true distribution. By optimality of the MLE solution, we have the following inequality,
\begin{IEEEeqnarray}{rCl}
\label{eqn:mleOpt}
\KL(\h^{\obs}, \tE_{P_\mle}[\h]) \leq \KL(\h^{\obs}, \tE_{P^\star}[\h]).
\end{IEEEeqnarray}
\begin{proposition}[Pinsker's Inequality~\cite{cover2012elements}]
For discrete distributions $P$ and $Q$:
\begin{equation}
    \KL(P, Q) \geq \frac{1}{2\text{ln}2} || P - Q ||_1^2.
\end{equation}
\end{proposition}
Using Pinsker's inequality and the optimality of the MLE solution, we can bound term $(c)$ in Equation~\eqref{eqn:boundEMDFingerprintEst} as follows:
\begin{IEEEeqnarray}{rCl}
\left| \sum_{j=0}^t b_j \left(h_j^{\text{obs}} - \text{E}_{P_\mle}[h_j] \right) \right| &\leq&  \sum_{j=0}^t |b_j|\ \left| h_j^{\text{obs}} - \text{E}_{P_\mle}[h_j] \right|, \nonumber\\
    &\leq& \max_j |b_j|\ \sum_{j=0}^t \left| h_j^{\obs} - \text{E}_{P_\mle}[h_j] \right|,\nonumber\\
    &\leq& \max_j |b_j|\ \sqrt{2\ \text{ln}2\ \KL(\h^{\obs}, E_{P_\mle}[\h])}, \\
    &\leq& \max_j |b_j|\ \sqrt{2\ \text{ln}2\ \KL(\h^{\obs}, E_{P^\star}[\h])}.
\end{IEEEeqnarray} 
Using the recent results on bounds on KL divergence between empirical observations and the true distribution for discrete distributions~\cite{mardia2018concentration},
for $3 \leq t \leq \sqrt{C_0 N} + 2$, w. p. $1 - \delta$,
\begin{IEEEeqnarray*}{rCl}
\left| \sum_{j=0}^t b_j \left(h_j^{\text{obs}} - \text{E}_{P_\mle}[h_j] \right) \right|
    \leq \max_j |b_j|\ \sqrt{ 2\ \text{ln} 2 } \sqrt{\frac{t}{2N} \log{\frac{4N}{t}} + \frac{1}{N} \log{\frac{3e}{\delta}} }.
\end{IEEEeqnarray*} 
\end{proof}

\subsection{Bounding the polynomial approximation error}
In this section we bound $\max_j |b_j|$ as well as term $(a)$ in Equation~\ref{eqn:boundEMDFingerprintEst}, both of which are related to polynomial approximation of Lipschitz-1 funcitons using Bernstein polynomials.
Let $f$ be any Lipschitz-1 function on $[0, 1]$. Let $\hat{f}$ be a polynomial approximation of $f$ using Bernstein polynomials of degree $t$:
\begin{equation}
\label{eqn:BernApproxgen}
\hat{f}(x) = \sum_{j=0}^t b_j \binom{t}{j} x^j (1-x)^{t-j} := \sum_{j=0}^t b_j B_j^t(x),
\end{equation}
where, 
\[B_j^t(x) := \binom{t}{j} x^j (1-x)^{t-j},\]
is $j-$th Bernstein polynomial of degree $t$, for $j = 0, 1, ..., t$.
Our goal is to bound the uniform approximation error, 
\[||f-\hat{f}||_\infty := \underset{x \in [0, 1]}{\max}|f(x) - \hat{f}(x)|,\]
while controlling the magnitude $|b_j|$, of the coefficients. We note that $\max_j |b_j|$ appears in the bounds of the error terms $(b)$ and $(c)$ in Equation~\eqref{eqn:boundEMDIntegralFingerprint}. Therefore, it is important to control it while bounding the polynomial approximation error to obtain tight bounds on the Wasserstein-1 metric in different regimes of $t$ and $N$.

Bernstein~\cite{bernstein1912, lorentz1953bernstein} used $t+1$ uniform samples of the function $f$ on $[0, 1]$, $f(\frac{j}{t}),\ j = 0, 1, ..., t$, as the coefficients in Equation~\ref{eqn:BernApproxgen} to prove Weierstrass Approximation Theorem and showed that the uniform approximation error of such an approximation is $||f-\hat{f}||_\infty \leq \frac{C}{\sqrt{t}}$, where $C$ is a constant. Since we are interested in approximating Lipschitz-1 functions on $[0,1]$, the co-efficients used, $\left|f(\frac{j}{t}) \right|$ are constants, as w.l.o.g, $|f| \leq \frac{1}{2}$. However, this approximation is not sufficient to show the bounds in Theorems~\ref{thm:smalltMLE} and~\ref{thm:mediumtMLE}. Therefore, the question of interest is: \emph{Can we obtain better uniform approximation error using Bernstein polynomials with other bounded coefficients?} The following proposition answers this question.
\begin{proposition}
\label{prop:coeffbound}
Any Lipschitz-1 function on $[0, 1]$ can be approximated using Bernstein polynomials (Equation~\ref{eqn:BernApproxgen}) of degree $t$, with an uniform approximation error of
\begin{enumerate}
    \item $\mathcal{O}(\frac{1}{t})$ with $\max_j |b_j| \leq \sqrt{t} 2^{t}$.
    \item $\mathcal{O}(\frac{1}{k})$ with $\max_j |b_j| \leq \sqrt{k} (t+1) e^{\frac{k^2}{t}}$, for $k < t$.
\end{enumerate}
\end{proposition}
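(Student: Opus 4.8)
The plan is to take $\hat f$ to be a near‑best polynomial approximant of $f$ of the appropriate degree, rewrite it in the degree‑$t$ Bernstein basis, and bound its coefficients by a contour‑integral argument. Write $d:=t$ for item~1 and $d:=k$ for item~2. First I would normalise: replacing $f$ by $f-f(\tfrac12)$ leaves $\|f-\hat f\|_\infty$ unchanged and, since $\sum_{j=0}^t B_j^t\equiv 1$, merely shifts every $b_j$ by the constant $f(\tfrac12)\in[-\tfrac12,\tfrac12]$, so I may assume $|f|\le\tfrac12$ on $[0,1]$; the cases $t=O(1)$ (resp.\ $k=O(1)$) are trivial with $b_j=f(j/t)$. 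By Jackson's theorem there is a polynomial $Q$ with $\deg Q\le d$ and $\|f-Q\|_{\infty,[0,1]}\le C/d$, hence also $\|Q\|_{\infty,[0,1]}\le 1$ for $d$ large. Set $\hat f:=Q$ and expand $Q=\sum_{j=0}^t b_j B_j^t$, legitimate since $\deg Q\le d\le t$; this already delivers the claimed approximation error, so everything reduces to bounding $\max_j|b_j|$.

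For the coefficient bound I would pass to the generating polynomial of the $b_j$: substituting $x=u/(1+u)$ and clearing denominators gives the identity
\[
R(u):=(1+u)^t\,Q\!\Big(\tfrac{u}{1+u}\Big)=\sum_{j=0}^t b_j\binom tj u^j ,
\]
so $b_j=\binom tj^{-1}[u^j]R(u)$ and, for any $\rho>0$, $|b_j|\le \rho^{-j}\binom tj^{-1}\cdot\frac1{2\pi}\int_0^{2\pi}\big|R(\rho e^{i\theta})\big|\,d\theta$. To control $|R|$ on $|u|=\rho$ I would combine the elementary identity $|1+u|=|\sqrt u+i|\,|\sqrt u-i|$ with the Bernstein growth lemma $|Q(w)|\le|\psi(w)|^{d}$, where $\psi(w)=(2w-1)+\sqrt{(2w-1)^2-1}$ maps $\mathbb C\setminus[0,1]$ conformally onto the exterior of the unit disk; a one‑line computation gives $\psi\!\big(\tfrac{u}{1+u}\big)=\tfrac{\sqrt u+i}{\sqrt u-i}$ (appropriate branch), so that on $|u|=\rho$
\[
\big|R(u)\big|\le M^{\,t+d}m^{\,t-d},\qquad M^2=(1+\rho)+2\sqrt\rho\,\big|\sin\tfrac\theta2\big|,\quad m^2=(1+\rho)-2\sqrt\rho\,\big|\sin\tfrac\theta2\big| .
\]

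I would then maximise $M^{t+d}m^{t-d}$ over $\theta$ (the maximiser is $\big|\sin\tfrac\theta2\big|=\min\{1,\,d(1+\rho)/(2t\sqrt\rho)\}$, an elementary computation) and minimise the resulting bound over $\rho$, choosing $\rho\asymp j/(t-j)$ so that $\rho^{-j}(1+\rho)^t$ cancels $\binom tj$ up to a $\mathrm{poly}(t)$ Stirling factor. This leaves $\max_j|b_j|\le \mathrm{poly}(t)\cdot\Lambda$ with $\Lambda=\big((t{+}d)^{t+d}(t{-}d)^{t-d}/t^{2t}\big)^{1/2}$. For $d=t$ the maximiser is $\big|\sin\tfrac\theta2\big|=1$ and $\Lambda=2^t$, which gives item~1; for $d=k<t$ one has $\Lambda=\exp\!\big(\tfrac t2\big[\ln(1-(k/t)^2)+(k/t)\ln\tfrac{1+k/t}{1-k/t}\big]\big)\le e^{k^2/t}$, which gives item~2.

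The hard part is this last optimisation: extracting the exact polynomial prefactors ($\sqrt t$ in item~1, $\sqrt k\,(t+1)$ in item~2) requires the right contour radius, sharp Stirling bounds for $\binom tj$, and a Laplace‑type estimate of $\frac1{2\pi}\int_0^{2\pi}|R(\rho e^{i\theta})|\,d\theta$ rather than just its maximum; one must also fix the branch of $\sqrt{\cdot}$ in the growth lemma and check where $|R|$ is actually largest on the circle. (For item~2 an alternative is to write $\hat f=B_t\big(B_t^{-1}Q\big)$ with $Q$ the degree‑$k$ Jackson approximant, so $b_j=(B_t^{-1}Q)(j/t)$, and bound $\|B_t^{-1}Q\|_\infty$ using that the Bernstein operator's eigenvalues $\lambda_m^{(t)}=t!/((t-m)!\,t^m)$ satisfy $1/\lambda_k^{(t)}\le e^{k^2/t}$; but this requires norm control on the Bernstein eigenfunctions and their dual functionals, so I would prefer the generating‑polynomial route as it is cleaner and essentially self‑contained.)
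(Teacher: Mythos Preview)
Your approach is sound and genuinely different from the paper's. Both routes pass through a generating function for the Bernstein coefficients and bound it on a circle in the complex plane, but the two arguments are organized very differently.

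The paper first expands $f$ in shifted Chebyshev polynomials, $\hat f_k=\sum_{m=0}^k a_m\tilde T_m$, and uses their orthogonality to get $\|a\|_2\le 1$. It then writes each $\tilde T_m$ in the degree-$t$ Bernstein basis with explicit coefficients $C(t,m,j)$, builds a generating function for $C(t,m,j)\binom{t}{j}$, incorporates the $\binom{t}{j}^{-1}$ via the Beta integral, bounds the resulting integrand on the \emph{unit} circle by a direct stationary-point computation, and finally applies Parseval to turn the sup bound into the $\ell_2$ bound $\big(\sum_j C(t,m,j)^2\big)^{1/2}\le(t+1)e^{m^2/t}$; the $\sqrt{k}$ in item~2 comes from Cauchy--Schwarz against $\|a\|_2$. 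By contrast, you bypass the Chebyshev structure entirely: you take any degree-$d$ Jackson approximant $Q$ with $\|Q\|_{\infty,[0,1]}\le 1$, form the single generating polynomial $R(u)=(1+u)^tQ(u/(1+u))$, invoke the Bernstein--Walsh growth lemma together with the neat factorization $\psi(u/(1+u))=(\sqrt u+i)/(\sqrt u-i)$ to get $|R|\le M^{t+d}m^{t-d}$, and then optimize the contour radius $\rho$ for each $j$. Your stationary-point computation leads to the same exponential factor $e^{k^2/t}$ as the paper's (indeed, your $\Lambda^{2/t}=(1+k/t)^{1+k/t}(1-k/t)^{1-k/t}\le e^{2k^2/t^2}$ is exactly the bound the paper extracts in its Appendix). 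What your route buys is generality---it uses only $\|Q\|_\infty\le 1$, not the Chebyshev expansion---and, once the Stirling/Laplace bookkeeping is done, it actually yields a prefactor of order $\sqrt t$ rather than $\sqrt k\,(t+1)$, which is no worse. What the paper's route buys is that Parseval on the unit circle gives a clean $\ell_2$ bound on the $C(t,m,j)$ in one stroke, avoiding the per-$j$ radius optimization and the separate treatment of the boundary case $|\sin(\theta/2)|=1$ (equivalently $j(t-j)<k^2/4$), which you correctly flag as the place where the work lies in your argument.
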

We are now ready to prove our main results in Theorem~\ref{thm:smalltMLE} and Theorem~\ref{thm:mediumtMLE}. We postpone the proof of Proposition~\ref{prop:coeffbound} to the end of this section.
\begin{proof}[Proof of Theorem~\ref{thm:smalltMLE}]
The first approximation result in Proposition~\ref{prop:coeffbound} with Lemma~\ref{lemma:concfingerprints} and Lemma~\ref{lemma:mlefingerprint} gives the following bound on the Wasserstein-1 distance between the MLE estimate and the true distribution,
\[W_1(P^\star, \hP_{\mle}) \leq \mathcal{O}\left( \frac{1}{t} \right) + \mathcal{O}\left( 2^t t \sqrt{\frac{1}{N} \log{\frac{1}{\delta}} }  \right), \]
with probability at least $1 - 2\delta.$ Therefore, for $t = \mathcal{O}(\log{N})$, we obtain the bound in Theorem~\ref{thm:smalltMLE}.
\end{proof}
\begin{proof}[Proof of Theorem~\ref{thm:mediumtMLE}]
With $k = \sqrt{t\ \log{N^c}}$, for appropriate choice of $c > 0$, we obtain a bound of $$\max_j |b_j| \leq t^{1/4}(t+1) \left(\log{N^c}\right)^{1/4}  N^c,$$ on the coefficients with a uniform approximation error $\mathcal{O}\left(\frac{1}{\sqrt{t\ \log{N}}} \right)$. Combining this bound with Lemma~\ref{lemma:concfingerprints} and Lemma~\ref{lemma:mlefingerprint} gives the following bound,
\[W_1(P^\star, \hP_{\mle}) \leq \mathcal{O}\left(\frac{1}{\sqrt{t \log{N}}} \right) + \mathcal{O}\left((t\ \log{N^c})^{\frac{1}{4}}(t+1)N^c  \sqrt{\frac{t}{N}\log{\frac{1}{\delta}}} \right),\]
with probability at least $1 - 2\delta.$ Therefore, for $t$ above $\Omega(\log{N})$, and $t = \mathcal{O}\left({N^{2/9 - \epsilon}} \right)$ for $\epsilon>0$ such that $N^{\epsilon} = \mathcal{O}\left(N^{4c/9} c^{1/9}\left(\log{N}\right)^{1/3} \right)$, we obtain the
result in Theorem~\ref{thm:mediumtMLE}.
\end{proof}

\textbf{Proof sketch of Proposition~\ref{prop:coeffbound}:} 
The key idea of our proof is to approximate $f$ using Chebyshev polynomials of degree $k \leq t$, which are then expressed as linear combinations of Bernstein polynomials of degree $t$ to obtain appropriate bounds on the coefficients $|b_j|$.
Let $\tT_m$ denote Chebyshev polynomial of degree $m$ shifted to $[0, 1]$ which satisfy the following recursive relation:
\begin{equation*}
    \tT_m(x) = (4x - 2)\tT_{m-1} - \tT_{m-2}(x),\ m= 2, 3,....,
\end{equation*}
and $\tT_0(x) = 1$, $\tT_1(x) = 2x - 1$. We use the following lemma regarding Chebyshev polynomial approximation. The proof is available in Appendix~\ref{app:prooflemmachebyapprox}.
\begin{lemma}\label{lemma:cheby-approx}
Given any Lipschitz-1 function $f(x)$ on $[0,1]$, there exists a degree $k$ polynomial in the form of $\hat{f}_k(x)=\sum_{m=0}^k a_m\tT_m(x)$ that approximates $f(x)$ with error $||f-\hat{f}_k||_\infty = O(\frac{1}{k})$, where $\tT_m(x)$ denotes Chebyshev polynomial of degree $m$ shifted to $[0, 1]$. Further, the coefficients $(a_0, a_1, a_2,\ldots,a_k)$ satisfies $\|a\|_2\le 1$.
\end{lemma}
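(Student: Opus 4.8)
The plan is to transport the problem from the interval $[0,1]$ to the circle, where the shifted Chebyshev polynomials become cosines and uniform polynomial approximation becomes trigonometric approximation of a periodic function. Substitute $2x-1=\cos\theta$, so that $\tT_m(x)=\cos(m\theta)$ for $x\in[0,1]$, $\theta\in[0,\pi]$, and set $g(\theta):=f\!\left(\tfrac{1+\cos\theta}{2}\right)$, extended evenly and $2\pi$-periodically to $\mathbb{R}$. Since $|\cos\theta_1-\cos\theta_2|\le|\theta_1-\theta_2|$, the function $g$ is $\tfrac12$-Lipschitz; being even, its Fourier series is a pure cosine series $g(\theta)=\sum_{m\ge0}c_m\cos(m\theta)$. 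Recalling the normalization $\|f\|_\infty\le\tfrac12$ in force throughout this section (adding a constant to $f$ leaves the Wasserstein dual unchanged, so this is harmless), Parseval's identity gives $c_0^2+\tfrac12\sum_{m\ge1}c_m^2=\tfrac1{2\pi}\int_{-\pi}^{\pi}g^2\le\|g\|_\infty^2\le\tfrac14$, hence $\|c\|_2^2=\sum_{m\ge0}c_m^2\le\tfrac34$. This a priori $\ell_2$ bound on the Chebyshev coefficients of $f$ is the source of the claimed $\|a\|_2\le1$.

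The bare degree-$k$ truncation $\sum_{m=0}^k c_m\cos(m\theta)$ already has coefficient norm at most $\|c\|_2\le\tfrac{\sqrt3}{2}<1$, but since the Dirichlet kernel has $L^1$-norm of order $\log k$ its uniform error is only $O(\log k/k)$, which is not good enough. So the second ingredient is to replace the sharp truncation by a smooth one: take $\hat g_k:=V_n g$, the de la Vall\'ee Poussin mean of order $n=\lfloor k/2\rfloor$, i.e. the trigonometric polynomial $\hat g_k(\theta)=\sum_{m=0}^{k}\lambda_{m,k}\,c_m\cos(m\theta)$ whose Fourier multipliers satisfy $0\le\lambda_{m,k}\le1$ with $\lambda_{m,k}=1$ for $m\le n$. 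This operator reproduces trigonometric polynomials of degree $\le n$ and its convolution kernel has $L^1$-norm bounded by an absolute constant, so the standard near-best-approximation estimate gives $\|g-\hat g_k\|_\infty\le C\,E_n(g)$, where $E_n(g)$ is the best degree-$n$ trigonometric approximation error; Jackson's theorem for Lipschitz functions yields $E_n(g)=O(1/n)$, hence $\|g-\hat g_k\|_\infty=O(1/k)$. Because $|\lambda_{m,k}|\le1$, Parseval again gives $\|a\|_2^2=\sum_{m=0}^{k}(\lambda_{m,k}c_m)^2\le\|c\|_2^2\le\tfrac34$, so $\|a\|_2\le1$.

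Finally I transport back: since $g$, and hence $\hat g_k$, are even, $\hat g_k$ is a cosine polynomial, so $\hat f_k(x):=\sum_{m=0}^k a_m\tT_m(x)$ is an algebraic polynomial of degree $\le k$ with $\hat f_k\!\left(\tfrac{1+\cos\theta}{2}\right)=\hat g_k(\theta)$; using evenness, $\|f-\hat f_k\|_{\infty,[0,1]}=\|g-\hat g_k\|_{\infty,\mathbb{R}}=O(1/k)$, and $\|a\|_2\le1$, as claimed. (For the finitely many small values of $k$ not covered by the asymptotics one may simply take $\hat f_k\equiv c_0$, for which $\|a\|_2=|c_0|\le\tfrac12$ and the $O(1)=O(1/k)$ error bound is trivial.)

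The only genuinely non-routine point is the tension just described: one needs uniform error $O(1/k)$, which rules out the plain Chebyshev partial sum, yet one must keep $\|a\|_2\le1$, which rules out an arbitrary Jackson-type near-best polynomial unless its coefficients are controlled. The de la Vall\'ee Poussin construction (equivalently, convolution against a nonnegative Jackson kernel) threads this needle precisely because (i) it is obtained from the Chebyshev coefficients of $f$ itself through multipliers in $[0,1]$, so Parseval transfers the bound $\|c\|_2\le\sqrt3/2$ to $\|a\|_2$, and (ii) its operator norm on $C$ is $O(1)$, which is exactly what converts Jackson's $E_n(g)=O(1/n)$ into an error bound for this particular polynomial. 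Both facts — the $[0,1]$ bound on the multipliers and the bounded Lebesgue constant — are classical and will be cited rather than reproved.
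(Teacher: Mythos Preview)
Your proof is correct and takes a genuinely different route from the paper. The paper's argument is nonconstructive: it cites Jackson's theorem to obtain \emph{some} degree-$k$ polynomial $f_k=\sum_{m=0}^k a_m\tT_m$ with $\|f-f_k\|_\infty=O(1/k)$, and then bounds the coefficients by applying Chebyshev orthogonality directly to $f_k$, using the (slightly loose) claim $|f_k|\le 1/2$ to get $a_0^2\tfrac{\pi}{4}+\|a\|_2^2\tfrac{\pi}{4}=\int_0^1|f_k|^2\,\tfrac{dx}{\sqrt{4x-4x^2}}\le\tfrac{\pi}{8}$. You instead construct $\hat f_k$ explicitly as the de~la~Vall\'ee Poussin mean of $f$ transported to the circle, and apply Parseval to $f$ rather than to the approximant; the multiplier structure $a_m=\lambda_{m,k}c_m$ with $0\le\lambda_{m,k}\le1$ is what transfers the a~priori bound $\|c\|_2\le\sqrt3/2$ on the Chebyshev coefficients of $f$ to $\|a\|_2$. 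Your route buys an explicit construction and sidesteps the minor imprecision in the paper (strictly, the cited Jackson polynomial only satisfies $|f_k|\le 1/2+O(1/k)$, which still gives $\|a\|_2\le1$ for $k$ large enough but deserves a remark); the paper's route is shorter because once existence is granted the orthogonality step is a one-liner. Both arguments ultimately invoke Jackson's theorem as a black box, and the Parseval/orthogonality computation on the circle is the common engine behind the coefficient bound in each.
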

Chebyshev polynomial $\tT_m$, can be written in terms of Bernstein-Bezier polynomials of degree $m$ as follows~\cite{Rababah2003}:
\begin{equation}
    \tT_{m}(x) = \sum_{i=0}^m \left(-1\right)^{m-i} \frac{\binom{2m}{2i}}{\binom{m}{i}} B_{i}^m(x).
\end{equation}
Note that the coefficients of $B_{i}^m$ can be at most $2^m$. To prove the first part of the proposition, we approximate the Lipschitz-$1$ function $f(x)$ using degree $k=t$ Chebyshev polynomial approximation provided by Lemma~\ref{lemma:cheby-approx} with error $\mathcal{O}(\frac{1}{t})$. This gives an upper bound of $2^t$ on the coefficients of Bernstein polynomial. 
To show the second part of the proposition, we need to bound the coefficients of the Bernstein polynomials when the degree of Chebyshev approximation satisfies $k < t$.\\
{\textbf{Degree raising:}} Bernstein polynomials of degree $m < t$ can be raised to degree $t$ as:
\begin{equation}
    B_{i}^m(x) = \sum_{j=i}^{i + t - m} \frac{\binom{m}{i} \binom{t-m}{j-i}}{\binom{t}{j}} B_{j}^t(x).
\end{equation}
Using degree raising of Bernstein polynomials, we can write shifted Chebyshev polynomials of degree $m < t$ in terms of Bernstein polynomials of degree $t$ as,
\begin{IEEEeqnarray}{rCl}
 \tT_{m}(x) &=& \sum_{i=0}^m \left(-1\right)^{m-i} \frac{\binom{2m}{2i}}{\binom{m}{i}} \sum_{j=i}^{i + t - m} \frac{\binom{m}{i} \binom{t-m}{j-i}}{\binom{t}{j}} B_{j}^t(x), \nonumber\\
 &=:& \sum_{j=0}^t C(t, m, j) B_{j}^t(x),
\end{IEEEeqnarray}
where the coefficient of $j$-th Bernstien polynomial of degree $t$ is given by\footnote{For positive integers $a , b > 0$, $\binom{a}{b} = 0$ when $a < b$.},
\begin{IEEEeqnarray}{rCl}
C(t, m, j) : = \sum_{l=0}^{j} \left(-1\right)^{m-l} \frac{\binom{2m}{2l} \binom{t-m}{j-l}}{\binom{t}{j}}.
\end{IEEEeqnarray}
With the closed form expressions of the coefficients of the Bernstein polynomials, what remains is to establish upper bounds of these coefficients. The following is a generating function for $C(t, m,j) \binom{t}{j}$,
\begin{IEEEeqnarray}{rCl}
(1 + z)^{t-m} \frac{(1 + i\ \sqrt{z})^{2m} + (1 - i\ \sqrt{z})^{2m} }{2} 
=\sum_{j = 0}^t  C(t, m,j) \binom{t}{j}  z^{t-j}.
\end{IEEEeqnarray}
Using Beta function, the binomial terms in the denominator can be written as, $$\binom{t}{j}^{-1} = (t+1)\int_0^1 (1-u)^ju^{t-j}du.$$ 
Combining, we obtain the following generating function for the coefficients,
\begin{IEEEeqnarray*}{rCl}
\frac{\sum_{j = 0}^t  C(t,m,j) z^{t-j} }{t+1} =\int_0^1(1-u)+uz)^{t-m}\frac{(\sqrt{1-u}+i\sqrt{uz})^{2m}+(\sqrt{1-u}-i\sqrt{uz})^{2m}}{2} du
\end{IEEEeqnarray*}
We bound the generating function of the coefficients on the unit circle and use Parseval's theorem to prove the following lemma (details are available in Appendix~\ref{app:boundgenfun}).
\begin{lemma}\label{lem:l2normbound}
The $l_2$-norm of the coefficients of $B_j^t$ can be bounded as follows,
\begin{equation}
\label{eqn:l2normbound}
    \sqrt{\sum_{j=0}^t |C(t, m, j)|^2} \leq (t+1) e^{\frac{m^2}{t}}.
\end{equation}
And, hence the coefficients of $B_j^t$ can be bounded as follows,
\begin{IEEEeqnarray}{rCl}
\label{eqn:coeffbound}
|C(t, m, j)| \leq (t+1) e^{\frac{m^2}{t}}.
\end{IEEEeqnarray}
\end{lemma}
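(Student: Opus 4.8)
The plan is to view $C(t,m,j)$ as the Fourier coefficients of a polynomial on the unit circle and apply Parseval. Let
\[
 g(z)\;:=\;\frac{1}{t+1}\sum_{j=0}^t C(t,m,j)\,z^{t-j},
\]
so that the coefficient of $z^{t-j}$ in $g$ is $C(t,m,j)/(t+1)$. Parseval's identity on $\{|z|=1\}$ gives
\[
 \sum_{j=0}^t\Big|\tfrac{C(t,m,j)}{t+1}\Big|^2\;=\;\frac{1}{2\pi}\int_0^{2\pi}\big|g(e^{i\theta})\big|^2\,d\theta\;\le\;\sup_{|z|=1}|g(z)|^2,
\]
so it suffices to establish the pointwise estimate $\sup_{|z|=1}|g(z)|\le e^{m^2/t}$. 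This yields~\eqref{eqn:l2normbound} after multiplying through by $t+1$, and~\eqref{eqn:coeffbound} follows immediately since $|C(t,m,j)|\le\big(\sum_{j}|C(t,m,j)|^2\big)^{1/2}$.

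For the pointwise estimate I would start from the Beta-integral representation of $g$ derived just above the lemma,
\[
 g(z)\;=\;\int_0^1\big((1-u)+uz\big)^{t-m}\cdot\frac{\big(\sqrt{1-u}+i\sqrt{uz}\big)^{2m}+\big(\sqrt{1-u}-i\sqrt{uz}\big)^{2m}}{2}\,du,
\]
where, since the odd powers of $\sqrt{uz}$ cancel, the integrand is a genuine polynomial in $z$ and the choice of branch of $\sqrt{uz}$ is irrelevant. Fix $z=e^{i\theta}$, take $\sqrt{uz}=\sqrt{u}\,e^{i\theta/2}$, and set $w_\pm:=\sqrt{1-u}\pm i\sqrt{u}\,e^{i\theta/2}$; a short computation gives $|w_\pm|^2=1\mp 2\sqrt{u(1-u)}\sin(\theta/2)$ and $w_+w_-=(1-u)+uz$, hence $|(1-u)+uz|=|w_+||w_-|$ and, by the triangle inequality,
\[
 |g(z)|\;\le\;\int_0^1\big(|w_+||w_-|\big)^{t-m}\,\frac{|w_+|^{2m}+|w_-|^{2m}}{2}\,du.
\]

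Writing $\rho:=2\sqrt{u(1-u)}\,|\sin(\theta/2)|\in[0,1]$, so that $\{|w_+|^2,|w_-|^2\}=\{1-\rho,\,1+\rho\}$, the integrand becomes $(1-\rho^2)^{(t-m)/2}\cdot\tfrac12\big((1-\rho)^m+(1+\rho)^m\big)$, which is at most $(1-\rho^2)^{(t-m)/2}(1+\rho)^m=(1-\rho)^{(t-m)/2}(1+\rho)^{(t+m)/2}$. It remains to show this last quantity is at most $e^{m^2/t}$ for every $\rho\in[0,1]$. Taking logarithms, $\Phi(\rho):=\tfrac{t-m}{2}\ln(1-\rho)+\tfrac{t+m}{2}\ln(1+\rho)$ is concave and its unique critical point is $\rho^\star=m/t$, so $\sup_{\rho}\Phi(\rho)=\Phi(m/t)=\tfrac{t}{2}\,\psi(m/t)$, where $\psi(x):=(1-x)\ln(1-x)+(1+x)\ln(1+x)=x^2\sum_{k\ge0}\tfrac{x^{2k}}{(k+1)(2k+1)}$. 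Since $\psi(x)\le x^2\sum_{k\ge0}\tfrac{1}{(k+1)(2k+1)}=(2\ln2)\,x^2\le 2x^2$ on $[0,1]$, we obtain $\Phi(m/t)\le\tfrac{t}{2}\cdot2(m/t)^2=m^2/t$, so $|g(z)|\le\int_0^1 e^{m^2/t}\,du=e^{m^2/t}$, as needed.

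The step I expect to be most delicate is this pointwise estimate on the circle, and in particular the need to keep the factor $|(1-u)+uz|^{t-m}$ coupled to the ``head'' factor $\tfrac12\big(|w_+|^{2m}+|w_-|^{2m}\big)$ rather than bounding each in isolation: near $u=\tfrac12,\ \theta=\pi$ the head factor is of order $2^m$, far larger than $e^{m^2/t}$ when $m\ll t$, and only the simultaneous vanishing of $|w_+w_-|=\sqrt{1-\rho^2}$ compensates for it. Once both effects are expressed through the single parameter $\rho$, everything reduces to the one-variable optimization $\sup_\rho(1-\rho)^{(t-m)/2}(1+\rho)^{(t+m)/2}$ and the elementary bound $\psi(x)\le 2x^2$; the only remaining bookkeeping nuisance is the branch of $\sqrt{uz}$ for complex $z$, which is harmless because the quantity being bounded is symmetric under $w_+\leftrightarrow w_-$ and, as noted, branch-independent.
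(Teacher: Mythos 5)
Your proof is correct and follows the same core strategy as the paper: bound the Beta-integral generating function $g(z)=\frac{1}{t+1}\sum_j C(t,m,j)z^{t-j}$ pointwise on the unit circle and then apply Parseval. Your factorization $(1-u)+uz=w_+w_-$ together with the substitution $\rho=2\sqrt{u(1-u)}\,|\sin(\theta/2)|$ collapses the paper's two-variable critical-point calculation (over $\theta$ and $u$) into a one-variable maximization of $(1-\rho)^{(t-m)/2}(1+\rho)^{(t+m)/2}$ and the elementary bound $(1-x)\ln(1-x)+(1+x)\ln(1+x)\le 2x^2$, which is a cleaner route to the same estimate $|g(z)|\le e^{m^2/t}$.
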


Let $f$ be a Lipschitz-1 function on $[0, 1]$. Let $f_k$ be the polynomial approximation using Chebyshev polynomials upto degree $k$ obtained from Lemma~\ref{lemma:cheby-approx}. We re-write each $\tT_m$ using Bernstein polynomials of degree $k$ followed by degree raising to $t$.
\begin{IEEEeqnarray}{rCl}
 f_k(x) &=& \sum_{m=0}^k a_m \tT_m(x) = \sum_{m=0}^k a_m \left( \sum_{j=0}^t C(t, m, j) B_{j}^t(x) \right), \nonumber\\
    &=& \sum_{j = 0}^t \left( \sum_{m=0}^{k} a_m\ C(t, m, j)\right) B_{j}^t(x), \nonumber\\
    &=:& \sum_{j=0}^t b_j B_{j}^t(x).
\end{IEEEeqnarray}
Since $||a||_2 \leq 1$, and from Equation~\ref{eqn:coeffbound}, we obtain the following bound on the coefficients, for $j = 0, 1, 2, ...., t$,
\begin{IEEEeqnarray}{rCl}
\label{eqn:boundCoeffOverall}
 |b_j| &=& \left|\sum_{m=0}^{k} a_m\ C(t, m, j) \right| \leq  \sum_{m=0}^{k} |a_m|\ \left| C(t, m, j) \right|,\nonumber\\
 &\leq& \sqrt{k}\ \max_{m} |C(t, m, j)| \label{eqn:boundonbj}\leq \sqrt{k} (t+1)e^{\frac{k^2}{t}}.
\end{IEEEeqnarray}
Lemma~\ref{lemma:cheby-approx} and Lemma~\ref{lem:l2normbound} together prove Proposition~\ref{prop:coeffbound}.

\subsection{Lower bound for medium $t$ regime}
The basic idea of the proof of Theorem~\ref{thm:lowerboundtlogN} is to construct a pair of distributions $P, Q$ whose first $\Theta(\log N)$ moments match and $W_1(P,Q)=\Theta(\frac{1}{\sqrt{t\log N}})$. With $N$ coins sampled from these distributions, each with $t$ flips, we argue that it is information theoretically hard to distinguish the two distributions. We need the following two propositions for the proof, where Proposition~\ref{prop:prior-exist} gives the existence of such a pair of distributions and Proposition~\ref{prop:tv-single} shows they are hard to distinguish. The proofs of these propositions are provided in Appendix~\ref{app:lowerboundproof}.
\begin{proposition}\label{prop:prior-exist}
For any positive integer $s$, there exists a pair of distributions $P$ and $Q$ supported on $[a,b]$ where $0<a<b$ such that $P$ and $Q$ have identical first $s$ moments, and $W_1(P, Q)\ge \frac{(b-a)}{2s}$.
\end{proposition}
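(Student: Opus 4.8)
The goal is to exhibit, for each positive integer $s$, a pair $P,Q$ on an interval $[a,b]$ with $0<a<b$ sharing their first $s$ moments while $W_1(P,Q)\ge \frac{b-a}{2s}$. The natural approach is to work with discrete distributions supported on $s+1$ points and to leverage the classical fact that a signed measure annihilating all monomials up to degree $s$ can be built from the $(s+1)$-st finite difference. Concretely, I would place $s+1$ equally spaced nodes $x_0<x_1<\cdots<x_s$ in $[a,b]$ (say $x_j = a + j\frac{b-a}{s}$) and consider the signed measure $\mu=\sum_{j=0}^s (-1)^j \binom{s}{j}\delta_{x_j}$. Since $\sum_j (-1)^j\binom{s}{j} x_j^r = 0$ for every $r\le s$ (the $(s+1)$-point alternating sum kills polynomials of degree $\le s$), splitting $\mu$ into its positive and negative parts, normalizing each by the common total mass $2^{s-1}$, yields two genuine probability distributions $P$ (mass on even-indexed nodes) and $Q$ (mass on odd-indexed nodes) with identical first $s$ moments.

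The remaining task is the lower bound on $W_1(P,Q)$. Using the dual (Kantorovich–Rubinstein) form $W_1(P,Q)=\sup_{f\in\mathrm{Lip}(1)} (\mathrm{E}_P[f]-\mathrm{E}_Q[f])$, I would exhibit a single Lipschitz-1 witness function. A clean choice is a sawtooth/triangular-wave function $f$ of period $2\frac{b-a}{s}$ adapted to the grid, normalized to have slope $\pm 1$, so that $f(x_j)$ alternates between its maximum value $\frac{b-a}{2s}$ at even nodes and its minimum $-\frac{b-a}{2s}$ at odd nodes (or vice versa). Then $\mathrm{E}_P[f]-\mathrm{E}_Q[f]$ equals $\frac{b-a}{2s}$ times a weighted alternating sum of the $\binom{s}{j}$ with all terms of the same sign, which after normalization gives exactly $\frac{b-a}{2s}$. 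This establishes $W_1(P,Q)\ge \frac{b-a}{2s}$, and one should double-check that $f$ is genuinely $1$-Lipschitz on $[a,b]$ (the grid spacing $\frac{b-a}{s}$ is exactly the half-period, so the peak-to-peak amplitude $\frac{b-a}{s}$ over a half-period $\frac{b-a}{s}$ gives slope $1$, which is tight).

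The main obstacle I anticipate is purely bookkeeping: lining up the parity of indices in $P$ versus $Q$ with the sign pattern of the witness $f$, and handling the case distinction between even and odd $s$ (which determines whether $x_0$ and $x_s$ land in the support of the same distribution, and hence the exact normalization constants on each side). One must make sure the two normalized halves really are probability measures — i.e. that the sums of $\binom{s}{j}$ over even $j$ and over odd $j$ are both equal to $2^{s-1}$, which holds for $s\ge 1$ — and that the witness $f$ is pinned so that its extreme values are attained precisely at the support points with the correct signs. None of this is deep; the content is the single binomial-coefficient identity for moment cancellation plus the explicit Lipschitz test function, and the rest is careful indexing. If one prefers to avoid the parity casework entirely, an alternative is to take the interval $[a,b]$ and rescale the standard construction on $[0,1]$ (noting both moment matching and $W_1$ scale correctly under affine maps $x\mapsto a+(b-a)x$), reducing to a single normalized computation.
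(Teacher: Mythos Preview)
Your construction has the right spirit, but there is an off-by-one slip in the moment-matching step that makes the argument fail as written. You assert that $\sum_{j=0}^s (-1)^j\binom{s}{j}x_j^r=0$ for every $r\le s$, justified by ``the $(s+1)$-point alternating sum kills polynomials of degree $\le s$.'' This is false for $r=s$: the sum $\sum_{j=0}^s(-1)^{s-j}\binom{s}{j}f(a+jh)$ is the $s$-th forward difference $\Delta^s f(a)$, which annihilates polynomials of degree at most $s-1$ but returns $s!\,h^s\neq 0$ on $f(x)=x^s$. Concretely, take $s=2$: your $P=\tfrac12(\delta_a+\delta_b)$ and $Q=\delta_{(a+b)/2}$ share the first moment but not the second. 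So your $(s+1)$-node pair matches only the first $s-1$ moments, not $s$. The fix is immediate: use $s+2$ equispaced nodes with weights $\binom{s+1}{j}$; then the first $s$ moments match, the spacing is $h=\tfrac{b-a}{s+1}$, and your sawtooth witness gives $W_1(P,Q)\ge h=\tfrac{b-a}{s+1}\ge\tfrac{b-a}{2s}$ for all $s\ge 1$. (Your witness computation also has a benign factor-of-two slip: with $f(x_j)=(-1)^j\tfrac{h}{2}$ one gets $\mathrm{E}_P[f]-\mathrm{E}_Q[f]=h$, not $h/2$, so the bound is actually stronger than you stated.)

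Once corrected, your route differs from the paper's. The paper does not build $P,Q$ explicitly at all: it invokes Lemma~3 of~\cite{Tian2017}, which already supplies such a pair on $[0,1]$, and then pushes forward by the affine map $x\mapsto a+(b-a)x$, checking that moment equality is preserved and that $W_1$ scales by $b-a$. Your closing remark about reducing to $[0,1]$ via rescaling is precisely this argument. Your finite-difference construction is more self-contained and makes the pair fully explicit, at the cost of the index bookkeeping you anticipated; the paper's citation-plus-affine-map proof is shorter but opaque about what $P$ and $Q$ actually are.
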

\begin{proposition}\label{prop:tv-single}
Let $P$ and $Q$ be two distributions, supported on $\left[\frac{1}{2}-\sqrt{\frac{\log N}{t}},\   \frac{1}{2}+\sqrt{\frac{\log N}{t}}\right]$, whose first $L:=e^4\log N$ moments match. Let $p\sim P$ and $q\sim Q$. Let $X\sim \text{Binomial}(t,p)$ and $Y\sim \text{Binomial}(t,q)$. Then the total variation distance between $X$ and $Y$ satisfies,
\[
    \text{TV}(X,Y) \le \frac{2\sqrt{t}}{N^{e^4}}.
\]
\end{proposition}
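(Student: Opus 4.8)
The plan is to move to the Fourier side: use the matching moments to annihilate the low-degree part of a Taylor expansion of the binomial characteristic function around $p=\tfrac12$, and then estimate the tail that remains.

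\textbf{Step 1 (reduce to a uniform bound on the characteristic-function difference).} Let $a_s:=\Pr[X=s]-\Pr[Y=s]$ for $s=0,\dots,t$ and $g(\omega):=\sum_{s=0}^t a_s e^{is\omega}=\text{E}[e^{i\omega X}]-\text{E}[e^{i\omega Y}]$. By Cauchy--Schwarz over the $(t+1)$-point support together with Parseval's identity,
\[
\text{TV}(X,Y)=\tfrac12\sum_{s=0}^t|a_s|\ \le\ \tfrac{\sqrt{t+1}}{2}\Big(\tfrac1{2\pi}\int_{-\pi}^{\pi}|g(\omega)|^2\,d\omega\Big)^{1/2}\ \le\ \tfrac{\sqrt{t+1}}{2}\,\max_{\omega}|g(\omega)|.
\]
So it suffices to prove $\max_\omega|g(\omega)|=O(N^{-e^4})$; the factor $\sqrt t$ in the statement is exactly this $\sqrt{t+1}$.

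\textbf{Step 2 (expand around $p=\tfrac12$ and cancel).} Writing $\text{E}[e^{i\omega X}]=\text{E}_{p\sim P}[(1-p+pe^{i\omega})^t]$ and substituting $p=\tfrac12+u$, we have $1-p+pe^{i\omega}=w+uv$ with $w=\tfrac{1+e^{i\omega}}2$ and $v=e^{i\omega}-1$, hence $(w+uv)^t=\sum_{k=0}^t\binom tk w^{t-k}v^k u^k$. Since $P$ and $Q$ share their first $L$ moments they also share the first $L$ moments of $u=p-\tfrac12$, so all terms with $k\le L$ cancel in $g$:
\[
g(\omega)=\sum_{k=L+1}^{t}\binom tk w^{t-k}v^k\big(\text{E}_P[u^k]-\text{E}_Q[u^k]\big).
\]
Using $|w|=|\cos\tfrac\omega2|$, $|v|=2|\sin\tfrac\omega2|$, and the support bound $|\text{E}_P[u^k]-\text{E}_Q[u^k]|\le 2r^k$ with $r:=\sqrt{\log N/t}$, this yields $|g(\omega)|\le 2\sum_{k>L}\binom tk(2r)^k|\cos\tfrac\omega2|^{t-k}|\sin\tfrac\omega2|^k$.

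\textbf{Step 3 (tail estimate) and the main obstacle.} Bound $|\cos\tfrac\omega2|^{t-k}|\sin\tfrac\omega2|^k$ by its maximum $\big(\tfrac{t-k}{t}\big)^{(t-k)/2}\big(\tfrac kt\big)^{k/2}$; using Stirling and $r^2t=\log N$, the $k$-th summand is $O\!\Big(\sqrt{\tfrac{t}{k(t-k)}}\,\big(2\sqrt{e\log N/k}\big)^{k}\Big)$. Because $L=e^4\log N$, for every $k>L$ the base $2\sqrt{e\log N/k}$ is a constant $<1$, so the series is dominated by its $k=L{+}1$ term, which is $N^{-\Omega(1)}$; tracking the constants gives $\max_\omega|g(\omega)|=O(N^{-e^4})$, and combining with Step 1 gives $\text{TV}(X,Y)\le \tfrac{2\sqrt t}{N^{e^4}}$. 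I expect this quantitative tail bound to be the real work: one must keep the $|\cos\tfrac\omega2|^{t-k}$ factor (bounding it by $1$ before taking the maximum makes the estimate blow up for large $t$), and pinning down the precise exponent $e^4$ with the clean prefactor $2\sqrt t$ needs careful Stirling bookkeeping and care with both the $k$-sum and the Cauchy--Schwarz/Parseval step. A cleaner but essentially equivalent route expands $p^s(1-p)^{t-s}$ in the Krawtchouk polynomials $\kappa_k$ orthogonal to $\text{Binomial}(t,\tfrac12)$, for which $\text{E}_{\text{Bin}(t,1/2)}[\kappa_k^2]=\binom tk$, leading to $\text{TV}(X,Y)\le\sum_{k>L}(2r)^k\sqrt{\binom tk}$ and the same kind of analysis.
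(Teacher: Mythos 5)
Your proof takes essentially the same route as the paper's: both expand the $\text{Binomial}(t,p)$ law around $p=\tfrac{1}{2}$, use the matched first $L$ moments to kill the low-order Taylor terms, and bound the tail by controlling a degree-$t$ polynomial on the unit circle via Cauchy--Schwarz plus Parseval (Proposition~\ref{prop:gen-bound-coeff}). Your $g(\omega)$ is exactly the Krawtchouk generating function $(1+z)^{t-k}(1-z)^k$ appearing in Proposition~\ref{prop:Bern-half-bound}, and the ``cleaner Krawtchouk route'' you sketch at the end \emph{is} the paper's proof; the only bookkeeping difference is that you apply Cauchy--Schwarz/Parseval once at the level of $\text{TV}$, whereas the paper applies them per Taylor order $k$, which gives the same bound up to constants.

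One quantitative point worth recording: your derivation correctly carries the factor $2^k$ coming from $|e^{i\omega}-1|=2|\sin\tfrac{\omega}{2}|$, i.e.\ the true identity is $\frac{B_j^{(k)}(1/2)}{k!}=2^{k-t}\binom{t}{k}(-1)^k K_j(k;t)$, not $2^{-t}\binom{t}{k}(-1)^k K_j(k;t)$ as written in Proposition~\ref{prop:Bern-half-bound} (check $t=2,j=1,k=2$: the left side is $-2$, the paper's formula gives $-\tfrac12$). With the $2^k$ restored, the geometric ratio for $k>L=e^4\log N$ is $2e^{-3/2}\approx 0.45$ rather than $e^{-1}$, so the tail sums to roughly $N^{-(3/2-\ln 2)e^4}\approx N^{-0.8e^4}$ rather than $N^{-e^4}$. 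Thus your ``tracking the constants gives $O(N^{-e^4})$'' in Step~3 is slightly optimistic, though the exponent you actually get is more than enough for Theorem~\ref{thm:lowerboundtlogN} to go through; the discrepancy is not a gap in your argument but rather a factor the paper itself silently dropped.
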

The proof of Theorem~\ref{thm:lowerboundtlogN} follows from these two propositions.
\begin{proof}[Proof of Theorem~\ref{thm:lowerboundtlogN}]
We first apply Proposition~\ref{prop:prior-exist} to construct a pair of distributions $P$ and $Q$ supported on $\left[\frac{1}{2}-\sqrt{\frac{\log N}{t}}, \frac{1}{2}+\sqrt{\frac{\log N}{t}}\right]$ such that their first $L:=e^4\log N$ moments match, and $W_1(P, Q)\ge \frac{1}{e^4}\frac{1}{\sqrt{t\log N}}$. Let $\mathbf{X} := \{X_i\}_{i=1}^N$ be random variables with $X_i \sim \text{Binomial}(t, p_i)$ where $p_i$ is drawn independently from $P$. Let $\mathbf{Y} := \{Y_i\}_{i=1}^N$ be random variables with $Y_i \sim \text{Binomial}(t, q_i)$ where $q_i$ is drawn independently from $Q$. Denote $P_N$ as the joint distribution of $\mathbf{X}$ and $Q_N$ as the joint distribution of $\mathbf{Y}$. It follows from Proposition~\ref{prop:tv-single} that $\text{TV}(X_i,Y_i)\le \frac{2\sqrt{t}}{N^{e^4}}$. By the property of the product distribution, for $t\le \frac{N^{2(e^4-1)}}{36}$, $\text{TV}(P_N, Q_N) \le \frac{2\sqrt{t}}{N^{e^4-1}}\le \frac{1}{3}$, which implies that the minimax error is at least $\frac{1}{3e^4\sqrt{t\log N}}$.
\end{proof}

\section{Discussion and Future Directions}
\label{sec:conclusion}
We consider the problem of learning the distribution of parameters over a heterogeneous population and show that the MLE achieves optimal error bounds with respect to Wasserstein-1 distance in the sparse observation setting. A future direction of work is to incorporate prior knowledge about the properties of the underlying distribution, such as, smoothness, as additional constraints to the MLE optimization problem. Another direction of interest is to extend the analysis to provide guarantees in general Wasserstein-$p$ norms. In a different direction, a natural question of interest is estimating the properties of the underlying distribution. While the MLE can be used as a plug-in solution to estimate properties of the underlying distribution, it is possible that certain properties could be directly estimated more accurately.  Still, one could imagine an analog of the results of~\cite{acharya2017unified}, who showed that in the related setting of drawing samples from a single discrete distribution over a large alphabet, for a large class of properties, applying the plug-in estimator to the results of the ``profile'' maximum likelihood distribution yields a nearly optimal estimator. 

\subsection{Discussion}
\label{sec:coeffboundconj}
We conjecture that the right bound on the coefficients of $B_j^{t}$ for every $m \leq t$ to be,
\begin{equation}
\label{eqn:coeffboundConjecture}
   |C(t, m, j)| \leq e^{\frac{m^2}{t}},\ j = 0, 1, 2, ..., t.
\end{equation}
In fact, for a fixed $m$, the coefficients $C(t, m, j)$ should  converge to points sampled uniformly from $T_m(x)$ as $t \rightarrow \infty$ by Bernstein's approximation. 
So, the bound on the coefficients should converge to $1$ as $t \rightarrow \infty$. 

Furthermore, we believe that the bound on the error due to the MLE could be improved to mirror that for the observed fingerprints, that is, 
$
\left| \sum_{j=0}^t b_j \left(h_j - \text{E}_{P_\mle}(h_j) \right) \right|
    \leq \mathcal{O}\left( \max_j |b_j| \sqrt{\frac{\log{1/\delta}}{N}}\right).
$
Therefore, our conjecture for the upper bound on the range of $t$ in Theorem~\ref{thm:mediumtMLE} is $\mathcal{O}\left({N^{2/3 - \epsilon}} \right)$.

The question of polynomial approximation of Lipschitz-1 functions using Bernstein polynomials with bounded coefficients is an interesting problem on it's own, with implications to general polynomial approximation theory and applications in computer graphics. 
\section*{Acknowledgements}
Sham Kakade acknowledges funding from the Washington Research Foundation for Innovation in Data-intensive Discovery, the National Science Foundation Grant under award CCF-1637360 (Algorithms in the Field) and award CCF-1703574, and the Office of Naval Research (Minerva Initiative) under award N00014-17-1-2313.  Gregory Valiant and Weihao Kong were supported by National Science Foundation award CCF-1704417 and Office of Naval Research award N00014-18-1-2295.

\bibliographystyle{alpha}
\bibliography{refs}

\appendix
\section{Proofs of Lemma~\ref{lemma:cheby-approx} and Lemma~\ref{lem:l2normbound}}
In this section we provide proofs for  Lemma~\ref{lemma:cheby-approx} and Lemma~\ref{lem:l2normbound} used to prove Theorem~\ref{thm:smalltMLE} and Theorem~\ref{thm:mediumtMLE}.

\subsection{Chebyshev polynomial approximation: Proof of Lemma~\ref{lemma:cheby-approx}}
\label{app:prooflemmachebyapprox}
In this section, we focus on proving Lemma~\ref{lemma:cheby-approx}. The existence of the Chebyshev polynomial approximation with error $\mathcal{O}(1/k)$ is shown using the following result,
\begin{lemma}~\cite{jackson1921general, bojanic1969proof}\label{lemma:exist-chebyshev}
Given any Lipschitz-1 function $f(x)$ on $[0,1]$, there exists a degree $k$ polynomial in the form of $f_k(x)=\sum_{m=0}^k a_m\tT_m(x)$ that approximates $f(x)$ with error $\max_{x\in [0,1]}|f(x)-f_k(x)| = O(\frac{1}{k})$, where $\tT_m(x)$ denotes Chebyshev polynomial of degree $m$ shifted to $[0, 1]$. 
\end{lemma}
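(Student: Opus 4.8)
The plan is to read off the existence of the approximant and the $O(1/k)$ sup-norm bound directly from Lemma~\ref{lemma:exist-chebyshev} (Jackson's theorem), and then to control $\|a\|_2$ \emph{a posteriori} by a Parseval argument applied to the particular degree-$k$ polynomial that Lemma~\ref{lemma:exist-chebyshev} produces. As the paper already notes, for the dual formulation of $W_1$ there is no loss in assuming $\|f\|_\infty\le\tfrac12$: subtracting a constant from $f$ keeps it Lipschitz-$1$, leaves $\|f-\hat f_k\|_\infty$ unchanged, and leaves $\mathrm{E}_P[f]-\mathrm{E}_Q[f]$ unchanged; and since a Lipschitz-$1$ function on $[0,1]$ has oscillation at most $1$, the constant can be chosen so that $\|f\|_\infty\le\tfrac12$. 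So I would fix such an $f$ from the outset, invoke Lemma~\ref{lemma:exist-chebyshev} to obtain, for each $k$, a polynomial $\hat f_k(x)=\sum_{m=0}^k a_m\tT_m(x)$ with $\|f-\hat f_k\|_\infty\le c/k$ for the absolute Jackson constant $c$, and record the crucial a-priori consequence $\|\hat f_k\|_\infty\le\tfrac12+\tfrac{c}{k}$, i.e.\ $\hat f_k$ is a uniformly bounded degree-$k$ polynomial whose bound tends to $\tfrac12$.

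The heart of the argument is to turn the $\ell_2$ bound on the Chebyshev coefficients of $\hat f_k$ into a statement about Fourier cosine coefficients. Under the substitution $x=\tfrac{1+\cos\theta}{2}$ the shifted Chebyshev polynomial satisfies $\tT_m\big(\tfrac{1+\cos\theta}{2}\big)=\cos(m\theta)$, so $g(\theta):=\hat f_k\big(\tfrac{1+\cos\theta}{2}\big)=\sum_{m=0}^k a_m\cos(m\theta)$ is a real even trigonometric polynomial with $\|g\|_\infty=\|\hat f_k\|_{\infty,[0,1]}$. Parseval's identity for the cosine system,
\[
a_0^2+\tfrac12\sum_{m=1}^k a_m^2=\frac{1}{2\pi}\int_{-\pi}^{\pi}g(\theta)^2\,d\theta\le\|g\|_\infty^2 ,
\]
then gives $\|a\|_2^2\le 2\|g\|_\infty^2=2\|\hat f_k\|_\infty^2\le 2\big(\tfrac12+\tfrac{c}{k}\big)^2$. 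Hence there is an absolute threshold $k_0$ — the least $k$ with $2(\tfrac12+c/k)^2\le 1$, which is finite because $\tfrac1{\sqrt2}-\tfrac12>0$ — such that $\|a\|_2\le 1$ for all $k\ge k_0$.

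For the finitely many remaining values $1\le k<k_0$ I would simply take $\hat f_k\equiv 0$: its coefficient vector is $0$, trivially of norm at most $1$, and its error is $\|f\|_\infty\le\tfrac12$, which is $O(1/k)$ once the implied constant is enlarged to $\max(c,\,k_0/2)$. Combining the two ranges yields a single constant $C$ with $\|f-\hat f_k\|_\infty\le C/k$ and $\|a\|_2\le 1$ for every $k\ge 1$, which is exactly Lemma~\ref{lemma:cheby-approx}.

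The step I expect to require the most care is noticing that the polynomial delivered by Lemma~\ref{lemma:exist-chebyshev} is \emph{not} the truncated Chebyshev expansion of $f$ (that natural choice would incur an extra $\log k$ factor in the sup-norm error and be too weak for Theorems~\ref{thm:smalltMLE} and~\ref{thm:mediumtMLE}); consequently $\|a\|_2$ cannot be bounded by a Bessel-type inequality against $f$ directly, and one genuinely needs the ``bounded polynomial $\Rightarrow$ bounded Chebyshev coefficients'' direction via Parseval, fed by the sup-norm control of $\hat f_k$ that the approximation error supplies for free. As an alternative that avoids the small-$k$ case analysis, one may replace the black-box Lemma~\ref{lemma:exist-chebyshev} by the classical Jackson operator, a positive convolution operator and hence norm-non-increasing, so that $\|\hat f_k\|_\infty\le\|f\|_\infty\le\tfrac12$ and $\|a\|_2\le\tfrac{1}{\sqrt2}<1$ for every $k$; I would mention this but keep the first route as the main line since it uses only results already cited.
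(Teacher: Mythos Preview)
Your proposal does not actually prove the stated Lemma~\ref{lemma:exist-chebyshev}: you invoke it as a black box (Jackson's theorem) and instead prove the stronger Lemma~\ref{lemma:cheby-approx}, which adds the claim $\|a\|_2\le 1$. The paper does exactly the same thing --- Lemma~\ref{lemma:exist-chebyshev} is not proved in the paper either but simply cited from~\cite{jackson1921general, bojanic1969proof}, and the surrounding appendix section is in fact the paper's proof of Lemma~\ref{lemma:cheby-approx}. So as a proof of the literal statement you were given, there is nothing to compare; as a proof of Lemma~\ref{lemma:cheby-approx}, see below.

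Your argument for Lemma~\ref{lemma:cheby-approx} and the paper's are essentially identical. The paper bounds $\|a\|_2$ by integrating $|f_k|^2$ against the Chebyshev weight $\frac{dx}{\sqrt{4x-4x^2}}$ and using orthogonality of the $\tT_m$; your Parseval computation for $g(\theta)=\hat f_k\big(\tfrac{1+\cos\theta}{2}\big)=\sum_m a_m\cos(m\theta)$ is literally the same integral after the change of variables $x=\tfrac{1+\cos\theta}{2}$, and both yield $\|a\|_2^2\le 2\|\hat f_k\|_\infty^2$. The one place you are more careful than the paper is the step where it asserts ``w.l.o.g.\ $|f_k(x)|\le 1/2$'': only $|f|\le 1/2$ is genuinely without loss of generality, and the approximant then satisfies $|\hat f_k|\le \tfrac12+O(1/k)$, which is exactly the slack you track and then dispose of by taking $\hat f_k\equiv 0$ for the finitely many small $k$. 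Your alternative route through the positivity (hence norm-non-increase) of the Jackson operator is a clean way to sidestep that case split and is not mentioned in the paper.
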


We now show that the coefficients satisfy $\|a\|\_2^2le 1$. 
Let $\tT_m$ denote Chebyshev polynomial of degree $m$ shifted to $[0, 1]$ which satisfy the following recursive relation: 
\[\tT_m(x) = (4x - 2)\tT_{m-1} - \tT_{m-2}(x),\ m= 2, 3,....,\]
and $\tT_0(x) = 1$, $\tT_1(x) = 2x - 1$.
Shifted Chebyshev polynomials form a sequence of orthogonal polynomials with respect to the weight $\frac{1}{\sqrt{4x - 4x^2}}$:
\begin{equation}
    \int_{0}^1 \tT_m(x) \tT_n(x) \frac{dx}{\sqrt{4x - 4x^2}} = \left\{
	\begin{array}{ll}
		0  & \mbox{if } m \neq n \\
		\frac{\pi}{2} & \mbox{if } m = n = 0\\
		\frac{\pi}{4} & \mbox{if } m = n \neq 0.
	\end{array}
\right.
\end{equation}
Let $f$ be a Lipschitz-1 function on $[0, 1]$. Let $f_k$ be degree $k$ polynomial approximation of $f$ using Chebyshev polynomials up to degree $k$,
\begin{IEEEeqnarray}{rCl}
\label{eqn:chebyshevApprox}
    f_k(x) &=& \sum_{m=0}^k a_m \tT_m(x).
\end{IEEEeqnarray}
Since $f$ is Lipschitz-1 on $[0, 1]$, w.l.o.g. $|f_k(x)| \leq 1/2$ for all $x \in [0, 1]$. So, $|a_m| \leq 1$.
Furthermore, the norm of the coefficient vector can be bounded as follows:
\begin{IEEEeqnarray}{rCl}
\int_0^1 |f_k(x)|^2 \frac{dy}{\sqrt{4x - 4x^2}}
&=& \sum_{m, n=0}^k a_m a_n \int_0^1   \tT_m(x) \tT_n(x) \frac{dy}{\sqrt{4x - 4x^2}}\nonumber\\
&=&a_0^2\ \frac{\pi}{2} + \sum_{m = 1}^k a_m^2\ \frac{\pi}{4} = a_0^2\ \frac{\pi}{4} + \sum_{m = 0}^k a_m^2\ \frac{\pi}{4}.
\end{IEEEeqnarray}
Since $|f_k(x)| \leq 1/2$ for all $x \in [0, 1]$ and $\int_0^1 \frac{dy}{\sqrt{4x - 4x^2}} = \frac{\pi}{2}$, we obtain the following bound,
\begin{equation}
    a_0^2\ \frac{\pi}{4} + ||a||_2^2 \frac{\pi}{4} \leq \frac{\pi}{8}.
\end{equation}
Hence, $||a||_2^2 \leq 1.$ Along with Lemma~\ref{lemma:exist-chebyshev}, this completes the proof of Lemma~\ref{lemma:cheby-approx}.

\subsection{Bound on generating function: Proof of Lemma~\ref{lem:l2normbound}}
\label{app:boundgenfun}
Using degree raising of Bernstein polynomials, we can write the shifted Chebyshev polynomials of degree $m < t$ in terms of Bernstein polynomials of degree $t$ as follows,
\begin{IEEEeqnarray}{rCl}
 \tT_{m}(x) &=& \sum_{i=0}^m \left(-1\right)^{m-i} \frac{\binom{2m}{2i}}{\binom{m}{i}} \sum_{j=i}^{i + t - m} \frac{\binom{m}{i} \binom{t-m}{j-i}}{\binom{t}{j}} B_{j}^t(x), \nonumber\\
 &=:& \sum_{j=0}^t C(t, m, j) B_{j}^t(x),
\end{IEEEeqnarray}
where the coefficient of $j$-th Bernstein polynomial of degree $t$ is given by\footnote{For positive integers $a , b > 0$, $\binom{a}{b} = 0$ when $a < b$.},
\begin{IEEEeqnarray}{rCl}
C(t, m, j) : = \sum_{l=0}^{j} \left(-1\right)^{m-l} \frac{\binom{2m}{2l} \binom{t-m}{j-l}}{\binom{t}{j}}.
\end{IEEEeqnarray}
Following is a generating function for the coefficients multiplied by the Binomial terms,
\begin{IEEEeqnarray}{rCl}\label{eqn:genfunCoeffBinom}
(1 + z)^{t-m} \frac{(1 + i\ \sqrt{z})^{2m} + (1 - i\ \sqrt{z})^{2m} }{2} \nonumber\\
=\sum_{j = 0}^t  C(t, m,j) \binom{t}{j}  z^{t-j}.
\end{IEEEeqnarray}
Using the Beta function the binomial terms in the denominator can be written as, 
$$\binom{t}{j}^{-1} = (t+1)\int_0^1 (1-u)^ju^{t-j}du.$$
Combing with Equation~\ref{eqn:genfunCoeffBinom}, we obtain the following generating function for the coefficients,

\begin{IEEEeqnarray}{rCl}
    &&(t+1)\int_0^1((1-u)+uz)^{t-m}\frac{(\sqrt{1-u}+i\sqrt{uz})^{2m}+(\sqrt{1-u}-i\sqrt{uz})^{2m}}{2} du\label{eqn:gen-full}\\
    &&=\sum_{j = 0}^t  \left( \frac{\sum_{l = 0}^j (-1)^{m-l} \binom{2m}{2l}  \binom{t-m}{j - l}}{\binom{t}{j}} \right) z^{t-j} \label{eqn:rhs-gen-fun}
\end{IEEEeqnarray}

Our goal is to bound the generating function Equation~\ref{eqn:gen-full} on the unit circle.
Let $i\sqrt{z} = \cos\theta +i\sin\theta$, then $z = \cos(\pi+2\theta)+i \sin(\pi+2\theta) = -\cos(2\theta)-i \sin(2\theta)$. We bound the norm of  $((1-u)+uz)^{t-m}\frac{(\sqrt{1-u}+i\sqrt{uz})^{2m}+(\sqrt{1-u}-i\sqrt{uz})^{2m}}{2}$ by bounding the norm of $(1-u)+uz$ and $(\sqrt{1-u}+i\sqrt{uz})$ which can be expresses as follows,
\begin{IEEEeqnarray*}{rCl}
|(1-u)+uz|^2 &=& (1-u(1+\cos(2\theta)))^2+u^2sin^2(2\theta) \\
&=& 1 + (2u^2 - 2u) (1 + cos 2\theta);
\end{IEEEeqnarray*}
\begin{IEEEeqnarray*}{rCl}
|\sqrt{1-u}+i\sqrt{uz}|^2 = (\sqrt{1-u}+\sqrt{u}\cos\theta)^2+u\sin^2\theta = 1 + 2 \sqrt{u - u^2} cos \theta.
\end{IEEEeqnarray*}
Thus, we can bound the logarithm of the magnitude of the integrand as follows,
\begin{IEEEeqnarray}{rCl}
&&\log\left|((1-u)+uz)^{t-m}\left(\frac{(\sqrt{1-u}+i\sqrt{uz})^{2m}+(\sqrt{1-u}-i\sqrt{uz})^{2m}}{2}\right)\right|\nonumber\\
&&\le \frac{t-m}{2} \log((1-u(1+\cos(2\theta)))^2+u^2sin^2(2\theta))+m\log((\sqrt{1-u}+\sqrt{u}\cos\theta)^2+u\sin^2\theta)\nonumber\\
&&=: B(\theta, v).\label{eqn:boundonLogMagGenfun}
\end{IEEEeqnarray}
Let $v = \frac{1}{2}-u$. Taking the derivative of $B(\theta, v)$ with respect to $\theta$ and $v$, we get the following two expressions:
\begin{align*}
\frac{dB(\theta, v)}{d\theta} &= -4 v \cos (\theta ) \left(\frac{2 (m-t) \cos (\theta )}{\left(4 v^2-1\right) \cos (2
   \theta )+4 v^2+1}+\frac{m}{-4 v^2 \cos (\theta )+\cos (\theta )+\sqrt{1-4
   v^2}}\right)\\
\frac{dB(\theta, v)}{dv} &= \frac{\left(4 v^2-1\right) (m-t) \sin (2 \theta )}{\left(4 v^2-1\right) \cos (2 \theta
   )+4 v^2+1}-\frac{m \sqrt{1-4 v^2} \sin (\theta )}{\sqrt{1-4 v^2} \cos (\theta )+1}
\end{align*}
In order to find the maximum of the function, we solve for $\theta, v$ such that the above two expressions equal $0$. Ignoring the solutions where $\theta = 0$ which are clearly not the maximum, we have $\theta = \arccos[m/t]$ and $v=0$. 
Plugging in the solution to the upper bound in Equation~\ref{eqn:boundonLogMagGenfun}, we obtain the following upper bound on the logarithm of the magnitude of the integrand of the generating function,
$$
B(\theta,v)\le \underset{<0}{\underbrace{\frac{t-m}{2}\log{\left(1-\frac{m^2}{t^2}\right)}}}+m\log
{\left(1+\frac{m}{t}\right)} \le m\log{\left(1+\frac{m}{t}\right)} \le \frac{m^2}{t}.
$$
Therefore, Equation~\ref{eqn:gen-full} can be bounded by $e^{m^2/t}(t+1)$. Hence, for all $z$ on the unit circle, Equation~\ref{eqn:rhs-gen-fun} is bounded by $(t+1)e^{m^2/t}$. 
\begin{proposition}\label{prop:gen-bound-coeff}
Given a degree $d-1$ real polynomial $p(x) = \sum_{i=0}^{d-1}a_ix^i$ that satisfies $|p(x)|\le c$ for all complex numbers $|x|=1$, the sum of the squares of the coefficients satisfies $\sum_{i=0}^{d-1}a_i^2\le c^2$. 
\end{proposition}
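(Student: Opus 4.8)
\textbf{Proof proposal for Proposition~\ref{prop:gen-bound-coeff}.}
The plan is to view the restriction of $p$ to the unit circle as a (finite) Fourier series and invoke Parseval's identity. Concretely, I would parametrize $|x|=1$ by $x = e^{i\theta}$, $\theta \in [0, 2\pi)$, so that
\[
p(e^{i\theta}) = \sum_{k=0}^{d-1} a_k e^{ik\theta}.
\]
The functions $\{e^{ik\theta}\}_{k\in\mathbb{Z}}$ are orthonormal with respect to the inner product $\langle g, h\rangle = \frac{1}{2\pi}\int_0^{2\pi} g(\theta)\overline{h(\theta)}\, d\theta$, i.e. $\frac{1}{2\pi}\int_0^{2\pi} e^{i(k-l)\theta}\, d\theta = \mathbbm{1}_{\{k=l\}}$. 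Expanding $|p(e^{i\theta})|^2 = \sum_{k,l} a_k \overline{a_l}\, e^{i(k-l)\theta}$ (here the $a_k$ are real, but this is not needed) and integrating term by term, all cross terms vanish and one is left with
\[
\frac{1}{2\pi}\int_0^{2\pi} |p(e^{i\theta})|^2\, d\theta = \sum_{k=0}^{d-1} |a_k|^2 = \sum_{k=0}^{d-1} a_k^2.
\]

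Next I would bound the left-hand side using the hypothesis: since $|p(x)| \le c$ for every $x$ on the unit circle, $|p(e^{i\theta})|^2 \le c^2$ pointwise, hence $\frac{1}{2\pi}\int_0^{2\pi} |p(e^{i\theta})|^2\, d\theta \le \frac{1}{2\pi}\int_0^{2\pi} c^2\, d\theta = c^2$. Combining the two displays gives $\sum_{k=0}^{d-1} a_k^2 \le c^2$, which is exactly the claim.

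There is essentially no serious obstacle here; the only point that needs any care is the orthonormality computation $\frac{1}{2\pi}\int_0^{2\pi} e^{i m\theta}\, d\theta = \mathbbm{1}_{\{m=0\}}$, which is elementary, and noting that the polynomial has only nonnegative powers so no truncation of an infinite series is involved. (Alternatively, one could phrase the whole argument via the discrete Parseval identity over $d$-th roots of unity, evaluating $p$ at $d$ equally spaced points on the circle; but the integral version above is cleanest and avoids any aliasing bookkeeping.) This proposition is then applied with $p(z) = \frac{1}{t+1}\sum_{j=0}^t C(t,m,j) z^{t-j}$ and $c = e^{m^2/t}$, using the bound on the generating function established just above, to deduce Equation~\eqref{eqn:l2normbound}, and hence the entrywise bound Equation~\eqref{eqn:coeffbound} in Lemma~\ref{lem:l2normbound}.
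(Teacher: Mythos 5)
Your argument is correct and rests on the same core idea as the paper's: Parseval's identity for the orthogonal system $\{e^{ik\theta}\}$ on the unit circle. The only difference is in the implementation. You integrate $|p(e^{i\theta})|^2$ over the full circle and invoke the continuous Parseval identity, while the paper samples $p$ at the $d$ points $x_k = e^{-2\pi i k/d}$ and invokes the discrete Parseval identity for the DFT of the coefficient vector $(a_0,\dots,a_{d-1})$, obtaining $\sum_{i} a_i^2 = \frac{1}{d}\sum_{k}|p(x_k)|^2 \le c^2$ --- precisely the alternative you mention parenthetically at the end. Both routes are valid; the integral form is marginally cleaner in that it avoids any bookkeeping about the degree matching the number of sample points, while the discrete form reduces the statement to a finite-dimensional unitary-change-of-basis fact and requires no integration. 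Either suffices, and both apply identically to the generating function $\frac{1}{t+1}\sum_{j} C(t,m,j)z^{t-j}$ with $c = e^{m^2/t}$ to conclude Lemma~\ref{lem:l2normbound}.
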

\begin{proof}
Let $g_k = p(x_k)$ where  $x_k = e^{-\frac{2\pi i}{d}k}$ for $k=0,1,\ldots,d-1$. By the assumption, we have $|g_k|\le c$ for all $k$. Notice that $(g_0, g_1, \ldots, g_{d-1})$ is the discrete Fourier transform of the coefficient vector $(a_0,a_1,\ldots,a_{d-1})$. Hence, by Parseval's theorem, we have $\sum_{i=0}^{d-1} a_i^2 = \frac{1}{d}\sum_{k=0}^{d-1} g_k^2 \le c^2$.
\end{proof}
From Proposition~\ref{prop:gen-bound-coeff}, we obtain the following bound the $l_2$-norm of the coefficients,
\begin{equation}
\label{eqn:l2normbound2}
    \sqrt{\sum_{j=0}^t |C(t, m, j)|^2} \leq (t+1) e^{\frac{m^2}{t}}.
\end{equation}
Using the above bound, each of the coefficient can be bounded as follows:
\begin{equation*}
    |C(t, m, j)| \leq (t+1) e^{\frac{m^2}{t}}.
\end{equation*}
This completes the proof of Lemma~\ref{lem:l2normbound}.

\section{Proof of Theorem~\ref{thm:lowerboundtlogN}}
\label{app:lowerboundproof}
In this section, we provide the detailed proofs of Proposition~\ref{prop:prior-exist} and Proposition~\ref{prop:tv-single} that are used to prove Theorem~\ref{thm:lowerboundtlogN}.

\subsection{Proof of Proposition~\ref{prop:prior-exist}}
Proposition~\ref{prop:prior-exist} states the following: 
For any positive integer $s$, there exists a pair of distributions $P, Q$ supported on $[a,b]$ where $0<a<b$ such that $P$ and $Q$ have identical first $s$ moments, and $W_1(P, Q)\ge \frac{(b-a)}{2s}$
\begin{proof}
Our proof leverages the following result from ~\cite{Tian2017},

\begin{lemma}[\cite{Tian2017}, Lemma 3]\label{lemma:mmm}
For any positive integer $s$, there exists a pair of distributions $P', Q'$ supported on $[0,1]$ such that $P'$ and $Q'$ have identical first $s$ moments, and $W_1(P', Q')\ge \frac{1}{2s}$.
\end{lemma}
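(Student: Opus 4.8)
The plan is to prove the lemma by convex duality: I will reduce the existence of a moment-matched pair with large $W_1$ distance to the existence of a Lipschitz-$1$ function on $[0,1]$ that is poorly approximated by polynomials of degree $\le s$, and then exhibit such a function — a sawtooth with exactly $s$ teeth — whose approximation error produces the constant $\tfrac1{2s}$.

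The combinatorial heart of the argument is the following approximation lower bound. Let $\mathcal P_s\subseteq C[0,1]$ be the (finite-dimensional, hence closed) subspace of polynomials of degree at most $s$, and let $f^*(x):=\mathrm{dist}\!\big(x,\{0,\tfrac1s,\tfrac2s,\ldots,1\}\big)$ be the distance from $x$ to the nearest multiple of $\tfrac1s$. Then $f^*$ is Lipschitz-$1$, it vanishes at the $s+1$ grid points $g_j=j/s$, and it equals its maximum value $\tfrac1{2s}$ at the $s$ midpoints $m_j=\tfrac{2j-1}{2s}$. I claim $\mathrm{dist}_\infty(f^*,\mathcal P_s)\ge\tfrac1{4s}$: if some $p\in\mathcal P_s$ had $\|f^*-p\|_\infty<\tfrac1{4s}$, then $p(g_j)<\tfrac1{4s}$ at every grid point while $p(m_j)>\tfrac1{2s}-\tfrac1{4s}=\tfrac1{4s}$ at every midpoint, so $p-\tfrac1{4s}$ takes alternating signs along the interlaced list $g_0<m_1<g_1<m_2<\ldots<m_s<g_s$ of $2s+1$ points and therefore has at least $2s$ zeros in $(0,1)$ — impossible for a nonzero polynomial of degree $\le s<2s$.

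Next I would run the duality. By Hahn--Banach duality for the distance to a subspace, together with the Riesz representation $(C[0,1])^*=M[0,1]$ (finite signed measures with the total-variation norm), $\mathrm{dist}_\infty(f^*,\mathcal P_s)=\max\{\int f^*\,d\mu:\ \mu\perp\mathcal P_s,\ \|\mu\|_{\mathrm{TV}}\le 1\}$, the maximum being attained by weak-$*$ compactness; since the left side is positive (by the previous paragraph), a maximizer $\mu^*$ has $\|\mu^*\|_{\mathrm{TV}}=1$. Because $1\in\mathcal P_s$, we get $\int d\mu^*=0$, so the Jordan parts satisfy $\|(\mu^*)^+\|=\|(\mu^*)^-\|=\tfrac12$. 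Set $P':=2(\mu^*)^+$ and $Q':=2(\mu^*)^-$: these are probability measures on $[0,1]$; for every $k\le s$, $\int x^k\,dP'-\int x^k\,dQ'=2\int x^k\,d\mu^*=0$ since $x^k\in\mathcal P_s$, so $P'$ and $Q'$ have identical first $s$ moments; and by the dual (Kantorovich--Rubinstein) formula of Definition~\ref{def:W1},
\[
W_1(P',Q')\ \ge\ \int f^*\,dP'-\int f^*\,dQ'\ =\ 2\int f^*\,d\mu^*\ =\ 2\,\mathrm{dist}_\infty(f^*,\mathcal P_s)\ \ge\ \tfrac1{2s},
\]
which is the lemma.

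The main obstacle is getting the constant exactly right. The duality step cleanly reduces everything to producing a Lipschitz-$1$ function with polynomial approximation error at least $\tfrac1{4s}$, but an off-the-shelf choice such as $|x-\tfrac12|$ gives only a smaller constant (its degree-$s$ error is $\Theta(1/s)$ with a leading constant well below $\tfrac14$). The sawtooth with exactly $s$ teeth is calibrated precisely so that the ``too many sign changes'' argument forces degree $2s$, just barely above $s$: using fewer teeth weakens the sign-change count, using more teeth shrinks the amplitude $\tfrac1{2s}$, and either way the bound degrades. The remaining points are routine — attainment of the dual maximum (Banach--Alaoglu) and the even split of $\|\mu^*\|_{\mathrm{TV}}$ between its positive and negative parts, which uses nothing beyond $1\in\mathcal P_s$.
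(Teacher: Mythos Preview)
Your argument is correct. The paper does not supply its own proof of this lemma: it is quoted verbatim as \cite{Tian2017}, Lemma~3, and invoked only as a black box inside the proof of Proposition~\ref{prop:prior-exist}. So there is no ``paper's proof'' to compare against; what you have written is a self-contained proof that the paper omits.

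On the substance: both steps are sound. The sawtooth-with-$s$-teeth lower bound $\mathrm{dist}_\infty(f^*,\mathcal P_s)\ge \tfrac1{4s}$ is exactly the equioscillation/sign-change counting argument you describe, and the count is tight (the $2s+1$ interlaced points force $2s$ zeros of a degree-$\le s$ polynomial, a contradiction). The duality step is the standard Hahn--Banach distance formula $\mathrm{dist}(f^*,\mathcal P_s)=\max\{\langle \mu,f^*\rangle:\mu\in\mathcal P_s^{\perp},\ \|\mu\|_{\mathrm{TV}}\le1\}$, with attainment via Banach--Alaoglu and weak-$*$ closedness of the finite-codimension annihilator; the even split of mass follows from $1\in\mathcal P_s$. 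The final chain $W_1(P',Q')\ge 2\int f^*\,d\mu^*=2\,\mathrm{dist}_\infty(f^*,\mathcal P_s)\ge \tfrac1{2s}$ is correct and uses only the Kantorovich--Rubinstein formula already stated in Definition~\ref{def:W1}.

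For context, the construction in \cite{Tian2017} is more explicit (it builds $P',Q'$ directly via Gaussian-quadrature nodes, i.e., zeros of shifted Legendre/Chebyshev polynomials, and reads off the $W_1$ gap from properties of those nodes). Your route is the dual of that: instead of exhibiting the measures, you exhibit the witnessing Lipschitz function and let Hahn--Banach manufacture the measures. Your approach is arguably cleaner for getting the constant $\tfrac1{2s}$, since the sawtooth is calibrated precisely to the threshold, whereas quadrature-based arguments typically pass through spacing estimates for the nodes.
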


The pair of distributions $P'$ and $Q'$ supported on $[0,1]$ can be transformed to a pair of distributions $P$ and $Q$ supported on $[a,b]$, where $0<a<b$, via transformation $P(x) = \frac{1}{b-a}P'\left(\frac{x-a}{b-a}\right)$. We show that $P$ and $Q$ have identical first $s$ moments as follows. For $k \leq s$,
\begin{IEEEeqnarray*}{rCl}
\int_a^b P(x)\ x^k dx &=& \int_a^b \frac{1}{b-a}P'\left(\frac{x-a}{b-a}\right)x^k dx\\
&=& \int_0^1 P'(y)\ (y(b-a)+a)^k dy\\
&=& \int_0^1 Q'(y)\ (y(b-a)+a)^k dy\\
&=& \int_a^b Q(x)\ x^k dx.
\end{IEEEeqnarray*}

\end{proof}

\subsection{Proof of Proposition~\ref{prop:tv-single}}
Proposition~\ref{prop:tv-single} states the following:
Let $P$ and $Q$ be two distributions supported on the interval $\left[\frac{1}{2}-\sqrt{\frac{\log N}{t}},\ \frac{1}{2}+\sqrt{\frac{\log N}{t}}\right]$, whose first $L:=e^4\log N$ moments match. Let $p\sim P$, $X\sim Binomial(t,p)$, $q\sim Q$ and $Y\sim Binomial(t,q)$. The total variation distance between $X$ and $Y$ satisfies
$$
\text{TV}(X,Y) \le \frac{2\sqrt{t}}{N^{e^4}}.
$$
\begin{proof}
The total variation distance between $X$ and $Y$ is 
\begin{IEEEeqnarray*}{rCl}
\text{TV}(X,Y) 
&=& \sum_{j = 0}^t \left|\mE_{p\sim P}[\binom{t}{j}p^j(1-p)^{t-j}]-\mE_{q\sim Q}[\binom{t}{j}q^j(1-q)^{t-j}]\right|\\
&=& \sum_{j = 0}^t \left|\mE_{p\sim P}[B_j(p)]-\mE_{q\sim Q}[B_j(q)]\right|,
\end{IEEEeqnarray*}
where $B_j(p) = \binom{t}{j}p^j(1-p)^{t-j}$ is the $j-$th Bernstein polynomial of degree $t$. Expanding the Bernstein polynomial at $p=\frac{1}{2}$ we get,
\begin{align*}
B_j(p) &= \sum_{k=0}^\infty \frac{B_j^{ (k)}\left(\frac{1}{2}\right)}{k!}\left(p-\frac{1}{2}\right)^k,
\end{align*}
where $B_j^{ (k)}(a) := \left.\frac{d^k B_j(x)}{dx^k}\right|_{x = a}$. 
Therefore, we can bound the total variation distance between $X$ and $Y$ as follows,
\begin{IEEEeqnarray*}{rCl}
\text{TV}(X,Y)
&=& \sum_{j = 0}^t \left|\sum_{k=0}^\infty \frac{B_j^{ (k)}\left(\frac{1}{2}\right)}{k!}\ \mE\left[\left(p- \frac{1}{2}\right)^k\right]-\sum_{k=0}^\infty \frac{B_j^{(k)}\left(\frac{1}{2}\right)}{k!}\mE\left[\left(q-\frac{1}{2}\right)^k\right] \right|\\
&&\le \sum_{k=0}^\infty \sum_{j = 0}^t \left|\frac{B_j^{ (k)}\left(\frac{1}{2}\right)}{k!}\right| \left|\mE\left[\left(p - \frac{1}{2}\right)^k\right]-\mE\left[\left(q- \frac{1}{2}\right)^k\right]\right|\\
&&\le 2\sum_{k=L+1}^\infty \sum_{j = 0}^t \left|\frac{B_j^{ (k)}\left(\frac{1}{2}\right)}{k!}\right|\left(\frac{\log N}{t}
\right)^{\frac{k}{2}}.
\end{IEEEeqnarray*}
The last inequality follows from the fact that $\mE\left[\left(p- \frac{1}{2}\right)^k\right]\le \left(\frac{\log N}{t}
\right)^{\frac{k}{2}}$, since $P$ and $Q$ are supported on $\left[\frac{1}{2}-\sqrt{\frac{\log N}{t}},\frac{1}{2}+\sqrt{\frac{\log N}{t}}\right]$ and their first $L$ moments match. Further, applying Proposition~\ref{prop:Bern-half-bound}, we obtain the following bound, 
\begin{align*}
\text{TV}(X,Y)&\le 2\sum_{k=L+1}^\infty 
\frac{\sqrt{t}e^{k}t^{k/2}}{k^{k/2}}
\left(\frac{\log N}{t}
\right)^{\frac{k}{2}} = 2\sum_{k=L+1}^\infty  \frac{\sqrt{t}e^k(\log N)^{k/2}}{k^{k/2}} 
,\\
&\le 2\sqrt{t} \sum_{k=L+1}^\infty \left(\frac{e^2\log N}{L}\right)^{k/2},\\
& = 2\sqrt{t} \sum_{k=L+1}^\infty \left(\frac{1}{e}\right)^{k},\ (\text{since } L = e^4 \log{N}),\\
&\le 2 \sqrt{t} e^{-e^4\log N} \le
\frac{2\sqrt{t}}{N^{e^4}}.
\end{align*}
\end{proof}
\begin{proposition}\label{prop:Bern-half-bound}
$$
\sum_{j=0}^t \left|\frac{B_j^{(k)}\left(\frac{1}{2}\right)}{k!} \right| \le  \frac{\sqrt{t}e^{k}t^{k/2}}{k^{k/2}}.
$$

\end{proposition}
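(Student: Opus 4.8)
The plan is to realise $\frac{B_j^{(k)}(1/2)}{k!}$ as a coefficient of a generating function and to bound the sum of absolute values of these coefficients by a contour integral, the delicate point being the choice of contour radius.

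First I would note that $B_j\!\left(\tfrac12+y\right)=\binom tj\!\left(\tfrac12+y\right)^{j}\!\left(\tfrac12-y\right)^{t-j}=\dfrac{\binom tj}{2^{t}}(1+2y)^{j}(1-2y)^{t-j}$, so that, writing $w=2y$,
$$\frac{B_j^{(k)}(1/2)}{k!}=[y^{k}]\,B_j\!\left(\tfrac12+y\right)=\frac{\binom tj}{2^{\,t-k}}\,[w^{k}]\,(1+w)^{j}(1-w)^{t-j}.$$
Summing absolute values over $j$ and extracting $[w^{k}]$ by Cauchy's formula on a circle $|w|=\rho$ with $0<\rho\le 1$,
$$\sum_{j=0}^{t}\left|\frac{B_j^{(k)}(1/2)}{k!}\right|\le\frac{1}{2^{\,t-k}\,2\pi\rho^{k}}\int_{0}^{2\pi}\sum_{j=0}^{t}\binom tj\,|1+\rho e^{i\theta}|^{\,j}\,|1-\rho e^{i\theta}|^{\,t-j}\,d\theta=\frac{1}{2^{\,t-k}\,2\pi\rho^{k}}\int_{0}^{2\pi}\bigl(|1+\rho e^{i\theta}|+|1-\rho e^{i\theta}|\bigr)^{t}\,d\theta,$$
the last equality being the binomial theorem. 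Since $|1+\rho e^{i\theta}|+|1-\rho e^{i\theta}|$ is the sum of the distances from $\rho e^{i\theta}$ to $\pm 1$, it is at most its value at $\theta=\pm\pi/2$, namely $2\sqrt{1+\rho^{2}}$; this already gives $\sum_{j}|\cdots|\le 2^{k}\rho^{-k}(1+\rho^{2})^{t/2}\le 2^{k}\rho^{-k}e^{\rho^{2}t/2}$.

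It then remains to optimise over $\rho$. Minimising $-k\log\rho+\rho^{2}t/2$ gives $\rho=\sqrt{k/t}$ (which is $\le 1$ for $k\le t$, while the sum vanishes for $k>t$), producing a bound of the form $(c\,t/k)^{k/2}$ with an absolute constant $c$; a Laplace-type refinement of the angular integral, using that $\bigl(|1+\rho e^{i\theta}|+|1-\rho e^{i\theta}|\bigr)^{t}$ is sharply peaked at $\theta=\pm\pi/2$ with Gaussian width $\asymp 1/(\rho\sqrt t)$, saves a polynomial factor and lets one fit everything under the stated right-hand side $\frac{\sqrt t\,e^{k}t^{k/2}}{k^{k/2}}$ (equivalently $\sqrt t\,(e^{2}t/k)^{k/2}$). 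The main obstacle is getting the exponential dependence right: the tempting shortcut — bounding $\bigl|[w^{k}](1+w)^{j}(1-w)^{t-j}\bigr|=\bigl|\sum_{l}(-1)^{k-l}\binom jl\binom{t-j}{k-l}\bigr|\le\binom tk$ by Vandermonde and then summing $\sum_{j}\binom tj\binom tk=2^{t}\binom tk$ — loses an exponential-in-$t$ factor, because it discards the cancellation among the alternating terms. Retaining that cancellation is precisely why one must stay with the contour integral (or, in the style of Lemma~\ref{lem:l2normbound}, bound the generating function on a circle and invoke Parseval) and pass to absolute values only afterwards; once this is done, the only remaining work is the trade-off between the pole order $k$ and the boundary growth $(1+\rho^{2})^{t/2}$, and it is this trade-off that manufactures the $k^{k/2}$ in the denominator.
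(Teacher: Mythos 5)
Your approach is genuinely different from the paper's. The paper first rewrites $\frac{B_j^{(k)}(1/2)}{k!}$ as a $j$-independent constant times a Kravchuk polynomial value $K_j(k;t)$ (whose generating function in the \emph{index} variable is $\sum_j K_j(k;t)z^j = (1+z)^{t-k}(1-z)^k$), then applies Cauchy--Schwarz over $j$ plus Parseval on the unit circle to bound $\sum_j |K_j|$. You instead work with the \emph{Taylor} variable $w$, represent $[w^k](1+w)^j(1-w)^{t-j}$ by a Cauchy integral of radius $\rho$, pull the $j$-sum inside the integral, and collapse it via the binomial theorem. That is a clean alternative route, and in fact, executed carefully, your contour bound (with the Laplace refinement replacing the blunt $\int d\theta = 2\pi$ by the peak width $\asymp (1+\rho^2)/(\rho\sqrt t)$) lands within a $\Theta(\sqrt t)$ of what the Kravchuk/Parseval route gives — both are dominated by a factor $2^k$ times a Stirling-type expression.

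The gap is in the last sentence of your argument. At the optimal radius $\rho=\sqrt{k/t}$ the crude pointwise bound is $2^k\rho^{-k}(1+\rho^2)^{t/2}\le 2^k e^{k/2}(t/k)^{k/2}=(4e\cdot t/k)^{k/2}$, whereas the stated right-hand side is $\sqrt t\, e^k (t/k)^{k/2}=\sqrt t\,(e^2\cdot t/k)^{k/2}$. The per-$k$ discrepancy is $(4e/e^2)^{k/2}=(4/e)^{k/2}\approx 1.21^k$, which is \emph{exponential} in $k$, not polynomial. The Laplace-type sharpening of the angular integral buys you only a factor $\Theta\!\left(\frac{\rho\sqrt t}{1+\rho^2}\right)\asymp\sqrt k$, so the improved bound still exceeds the target by roughly $\frac{(4/e)^{k/2}}{\sqrt{kt}}$. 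This is fine when $k=O(\log t)$, but it blows up for larger $k$ — and the place this proposition is invoked (Proposition~\ref{prop:tv-single}) needs it for $k>L=e^4\log N\gg\log t$, precisely the regime where your bound fails. So "saves a polynomial factor and lets one fit everything under the stated right-hand side" is not a correct conclusion; the claim needs a different argument (or a weakened constant such as $(2e)^k$ in place of $e^k$, which is what either your route or the Kravchuk/Parseval route actually yields once one keeps track of the $2^k$ coming from the change of variable $y\mapsto 2y$; note in passing that the paper's own displayed identity $\frac{B_j^{(k)}(1/2)}{k!}=\frac{1}{2^t}\binom tk(\cdots)$ is missing exactly this $2^k$, and the weakened constant still suffices downstream since $L=e^4\log N$ has ample slack).
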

\begin{proof}
Each term involving the $k$-th order derivative of Bernstein polynomial $B_j(x)$ evaluated at $\frac{1}{2}$ can be written as:
\begin{align*}
\frac{B_j^{ (k)}\left(\frac{1}{2}\right)}{k!}
&= \frac{1}{2^t}\binom{t}{k}\sum_{i=0}^k\binom{k}{i}\binom{t-k}{j-i}(-1)^{(k-i)},\\
&= (-1)^k\frac{1}{2^t}\binom{t}{k}K_j(k;t),
\end{align*}
where $K_j(k;t)$ is the Kravchuk polynomial. 

We then apply Proposition~\ref{prop:gen-bound-coeff} to obtain an upper bound for $\sqrt{\sum_{j=0}^tK_j^2(k;t)}$ by bounding the generating function of Kravchuk Polynomial. The generating function of the Karvchuk polynomials is $$\sum_{j=0}^tK_j(k; t)\ z^j = (1+z)^{t-k}(1-z)^k.$$ 
Our aim is to bound the absolute value of the generating function evaluated on the unit circle. Define $z = \cos\theta+i\sin\theta$. The absolute value of the generating function is  $|(1+z)^{t-k}(1-z)^k| = 2^t \cos^{t-k}(\frac{\theta}{2})\sin^k(\frac{\theta}{2})$ which achieves maximum at $\theta =2\arcsin(\sqrt{k/t})$ with value $2^t(\frac{t-k}{t})^{(t-k)/2}(\frac{k}{t})^{k/2}$. 
Hence, it follows from Proposition~\ref{prop:gen-bound-coeff} that
\begin{IEEEeqnarray*}{rCl}
\sum_{j=0}^t \left|\frac{B_j^{(k)}\left(\frac{1}{2}\right)}{k!}\right| &\le& \frac{1}{2^t}\binom{t}{k}\sqrt{t}\sqrt{\sum_{j=0}^t K_j^2(k;t)} \\
&\le& \binom{t}{k}\sqrt{t}\left(\frac{t-k}{t}\right)^{(t-k)/2}\left(\frac{k}{t}\right)^{k/2}.
\end{IEEEeqnarray*}
Finally, it follows from $\binom{t}{k}\le (\frac{e t}{k})^k$ and $(1-\frac{k}{t})^{(t-k)/2} \le {e^{(k^2/t-k)/2}}$ that
$$
\sum_{j=0}^t\left|\frac{B_j^{(k)}\left(\frac{1}{2}\right)}{k!}\right| \le \frac{\sqrt{t}e^{(k^2/(t-k))/2}t^{k/2}}{k^{k/2}}\le \frac{\sqrt{t}e^{k}t^{k/2}}{k^{k/2}}.
$$
\end{proof}

\section{Local moment matching}
\label{app:localmomentmatching}
In this section, we provide a high-level idea of how the local moment matching~\cite{jiao2018minimax} can be extended to obtain an algorithm to estimate the distribution of the parameters. The algorithm consists of three steps:
\begin{enumerate}
    \item \textbf{Binning:}
We divide the coin flips into two batches. The first batch of the data consists of the result of the first $t/2$ coin flips of each coin, and we call the first batch of samples $X'_1, X'_2,\ldots, X'_N$ and the second batch of samples $X_1, X_2,\ldots, X_N$. We define disjoint intervals \begin{equation}
\label{eqn:intervals}
I_j := \left[ \frac{(j-1)^2\ c_1 \log{N}}{t},  \frac{j^2\ c_1 \log{N}}{t}\right],
\end{equation} 
for $j = 1, 2, ... M := \sqrt{\frac{t}{c_2 \log{N}}}$ (assuming $\sqrt{\frac{t}{c_2 \log{N}}}$ be an integer). We define $l_j = \frac{(j-1)^2\ c_1 \log{N}}{t}, r_j = \frac{j^2\ c_1 \log{N}}{t}$ to be the left and right end of the $i$-th interval. The $i$-th coin is assigned to interval (bin) $j$ if $X'_i/t\in I_j$.

\item \textbf{Moment estimation:}
In the second step, we estimate the first $c_2\log N$'th moments of the coins in each interval (bin). Here the $k$-th moment of the coins in the $j$-th bin is defined to be $m_k^{(j)}:=\sum_{X'_i/t\in I_j} (p_i-l_j)^k$. It follows from Lemma 1 of~\cite{Tian2017} that $\frac{\binom{X_i}{l}}{\binom{t}{l}}$ is an unbiased estimator for $p_i^l$. Hence, $\sum_{l=0}^k\binom{k}{l}(-l_j)^{k-l}\frac{\binom{X_i}{l}}{\binom{t}{l}}$ is an unbiased estimator of $(p_i-l_j)^k$. We compute $\sum_{X'_i/t\in I_j}\sum_{l=0}^k\binom{k}{l}(-l_j)^{k-l}\frac{\binom{X_i}{l}}{\binom{t}{l}}$ as an estimate of the $k$-th moment of the $j$-th bin for all $k=0,1\dots,c_2\log N$, and denote it as $\hat{m}_k^{(j)}$.

\item \textbf{Distribution recovery:}
In the third step, for each bin $j$, we solve a linear programming (see e.g. Algorithm 1 of~\cite{Tian2017}) to recover a distribution $\mu_j$ supported on $\tilde{I}_j := [\frac{(j-\frac{3}{2})^2\ c_1 \log{N}}{t},  \frac{(j+1)^2\ c_1 \log{N}}{t}]$ whose first $c_2\log N$ moments closely match our estimation $\hat{m}_1^{(j)}, \hat{m}_2^{(j)}, \ldots, \hat{m}_{c_2\log N}^{(j)}$. Finally, we output $\hat{P}_{lmm} = \sum_j \mu_j$ as the estimate.

\end{enumerate}

\end{document}